\newtheorem{prop}{Proposition}[section]
\newtheorem{thm}[prop]{Theorem}
\newtheorem{lem}[prop]{Lemma}
\newtheorem{cor}[prop]{Corollary}
\theoremstyle{definition}
\newtheorem{rem}[prop]{Remark}
\newcommand{\bFp}{\mathbb{F}_p}
\begin{document}

\title[Expanding bounds]{Conditional expanding bounds for two-variable functions over prime fields}

\author[N. Hegyv\'ari]{Norbert Hegyv\'ari\textsuperscript{1}}

\address{Norbert Hegyv\'{a}ri, ELTE TTK,
E\"otv\"os University, Institute of Mathematics, H-1117
P\'{a}zm\'{a}ny st. 1/c, Budapest, Hungary}
\email{hegyvari@elte.hu}

\author[F. Hennecart]{Fran\c cois Hennecart\textsuperscript{2}}
\address{Fran\c cois Hennecart,
Universit\'e de Saint-\'Etienne,
Institut Camille Jordan,
23, rue Michelon,
42023 Saint-\'Etienne, France} \email{francois.hennecart@univ-st-etienne.fr}

\thanks{\textsuperscript{1} Budapest - Hungary}
\thanks{\textsuperscript{2} Saint-Etienne - France}

\thanks{Research of the authors is partially supported  by OTKA grants K~81658, K 100291}

\dedicatory{To Yahya}

\date{\today}

\begin{abstract}
In this paper we provide in $\bFp$ expanding lower bounds for  
two variables functions $f(x,y)$ in
connection with the  product set or the sumset. The sum-product problem 
has been hugely studied in the recent past. A typical result in $\bFp^*$
is the existenceness of $\Delta(\alpha)>0$ such that if $|A|\asymp p^{\alpha}$ then
$$
\max(|A+A|,|A\cdot A|)\gg |A|^{1+\Delta(\alpha)},
$$
Our aim is to obtain analogous
results for related pairs of two-variable functions $f(x,y)$ and $g(x,y)$:
if $|A|\asymp|B|\asymp p^{\alpha}$ then
$$
\max(|f(A,B)|,|g(A,B)|)\gg |A|^{1+\Delta(\alpha)}
$$
for some $\Delta(\alpha)>0$.
\end{abstract}

\maketitle
 \section{\bf Introduction}

We denote by $\bFp$ the field with $p$ elements and by $\bFp^*=\bFp\setminus\{0\}$ its multiplicative
group. Expanding properties of functions in $\bFp^*$ have been widely investigated in the
last decade. If $A\subset\bFp$, we denote by $|A|$ its cardinality and write $|A|\asymp p^{\alpha}$ if
$c_1p^{\alpha}<|A|<c_2p^{\alpha}$ for some fixed real numbers
$0<c_1<c_2$. Throughout the paper we will use the Vinogradov's symbol $\gg$  in the following way: $X\gg Y$ means that there exists
an absolute constant $\kappa>0$ such that $X\ge \kappa Y$ where
$X$ and $Y$ are numbers generally depending on certain parameters as the prime number $p$, the subsets $A,B,\dots$ of $\bFp$.

For a given function $f(x,y)$ and two subsets $A,B$ of $\bFp^*$,
we denote
$$
f(A,B)=\{f(a,b)\,:\, (a,b)\in A\times B\}.
$$ 
The sumset corresponds to the function $x+y$ and is denoted by $A+B$ ; the product-set corresponds to the function $xy$ and is denoted by $A\cdot B$.

A function $f:\bFp^*\times \bFp^*\to\bFp$ being given, what can be said on
$$
\inf_{\substack{A\subset \bFp^*\\|A|,|B|\asymp p^{\alpha}}}\frac{\log |f(A,B)|}{\ln p},
$$
for $0<\alpha<1$ ? A function is called an expander (according to $\alpha$) if
the above quantity is uniformly in $p$ bigger than $\alpha$. For instance, $f(x,y)=x(x+y)$ is known to be an
expander for any $\alpha$ (cf. \cite{Bo}).  A wide family of expanders has also
been provided in \cite{HH}.

A related question due to Erd\H os and  Szemer\'edi is the sum-product problem, which takes its roots from the analogous problem in $\mathbb{R}$. For $A$ be a finite subset in a ring, we denote 
$$
SP(A)=\max(|A+A|,|A\cdot A|).
$$ 
The best known statement  for real numbers asserts that  for $A\subset\mathbb{R}$,  
$$SP(A)\ge \frac{|A|^{4/3}}{2(\log|A|)^{1/3}},
$$ 
(see \cite{So2}).

\medskip
For a set $A$ in $\bFp^*$, the growth will be plainly limited according to the size of 
$\log|A|/\log p$. We may  confer \cite{BT} for a complete description of the recent improvements for the size of
 $SP(A)$. In particular for large subset $A$ of $\bFp$ Garaev (cf. \cite{Ga}) obtained the bound 
$SP(A)\gg \min(\sqrt{p|A|},|A|^2/\sqrt{p})$. His proof uses exponential sums. This result implies $SP(A)\gg |A|^{5/4}$ if $|A|\asymp p^{2/3}$. 
This bound has also been obtained in \cite{So1} by the use of a graph-theoretical approach. In the same paper  Solymosi proved 
also the bound  
$\max(|A+B|,|f(A)+C|)\gg \min(p|A|,|A|^2|B||C|/p)^{1/2}$
where $f$ is any polynomial with integral coefficients and degree
greater than one. In \cite{Vu} Vu introduces the class of non-degenerate
polynomials $f(x,y)$ over a finite field $\mathbb{F}$. 
For such a polynomial one has $\max(|A+A|,|f(A,A)|)\gg
\min(|A|^{2/3}|\mathbb{F}|^{1/3},|A|^{3/2}|\mathbb{F}|^{-1/4})$ for any
$A\subset \mathbb{F}$.
In \cite{HLS} Hart, Li and Shen studied such expanding phenomena
in connection with the sum-product property and 
obtained 
lower bound for the size of $\max(|u(A)*B|,|v(A)\circ C|)$ where
$u,v$ are polynomials over $\mathbb{F}$ and $*,\circ\in\{+,\times\}$.
All these 
lower bounds are non trivial only for $|A|,|B|>|\mathbb{F}|^{1/2}$.
In \cite{HLS}  the notion of expansion is 
also extended for subsets $E$
of $\mathbb{F}^2$ which are not necessarily a cartesian product 
$A\times B$  and gives a non trivial lower bound for
$\max(|f(E)|,|E+F|)$ where $f:\mathbb{F}^2\to\mathbb{F}$ is a non-degenerate polynomial of degree $k$
and $E,F$ are subsets of $\mathbb{F}^2$ with $|E|\gg k|\mathbb{F}|$.

For small subsets $A$ of the prime field $\mathbb{F}_p$, namely 
if $|A|\le\sqrt{p}$,
it has been proved in \cite{Li} that  $SP(A)\gg |A|^{13/12}$. We will use this fact in section \ref{s4}.  In \cite{She}, the author provides
a generalization of this lower bound by showing 
$\max(|A+A|,|f(A,A)|)\gg |A|^{13/12}$ for $|A|\le \sqrt{p}$
and where $f(x,y)=x(g(x)+y)$ for any arbitrary function $g:\bFp\to\bFp$. One will express this property by notifying that the function $f(x,y)$ satisfies a conditional expanding property relatively to the sum $x+y$. 

\medskip
All the above quoted results brought to light 
relative expansion properties according to a pair of two-variable functions, properties which are closely connected to the sum-product problem.
In this note, we are interested in a somewhat more general conditional expanding statement of the following
kind:
$$
|A|,|B|\asymp p^{\alpha}\Rightarrow \max(|f(A,B)|,|g(A,B)|)\gg|A|^{1+\Delta}.
$$
We will first obtain results with $g(x,y)=xy$ or $x+y$  combined with some more complicated functions $f(x,y)$. For it we will use a generalization of 
Solymosi's approach in \cite{So1} by the mean of $d$-regular graphs,
yielding to Theorems \ref{t1}, \ref{t2} and \ref{t3}.
Then we will use it
in connection with an explicit statement of the Balog-Szemer\'edi-Gowers Theorem
as done in \cite{GS} and \cite{BT} in the case $A=B$
(cf. Theorems \ref{t5} and \ref{t6}).
We will also focus our study on the function $xy(x^k+y^k)$
in section \ref{s4} for small subsets of $\mathbb{F}_p$ and
in section \ref{s6} for finite sets of real numbers.

\bigskip\noindent
\textit{Acknowledgement}.
The authors are greatful  to the referee for bringing into their 
knowledge some useful recent references on the subject.

%Similar conditional results have been also stated in \cite{Vu}  
%essentially by a similar approach. 

\section{\bf Statement of the results}

We will use a bound  for the number of edges between two sets of vertices
of a regular directed graph. The idea of the proof is close to that from \cite{So1} and is extended to directed graph.  One first recall some basic facts on this notion. A (finite) directed
multigraph (or simply graph) 
$\Gamma=(V;E)$ is given by its (finite) set of vertices $V$ and
its (finite) set of edges $E\subset (V\times V)\times \mathbb{N}$ :
if in $\Gamma$, there are $k$ edges from $v$ to $w$, 
\textit{i.e.} $\left|\left(\{(v,w)\}\times\mathbb{N}\right)\cap E\right|=k$,
they are denoted $(v,w ; i)$, $i=1,\dots,k$. 
The adjacency matrix of $\Gamma$, $n=|V|$, 
is the $n\times n$ matrix
$M=(a_{vw})_{v,w \in V}$ defined by $a_{vw}=k$ 
if there are exactly $k$ edges from $v$ to $w$.
The (directed) graph $\Gamma$ is said to be $d$-regular if 
the sum of the coefficients on an arbitrary row or column of $M$ is equal to $d$.
It is said to be connected if, for each pair $(v,w)$ of vertices there exists a (directed) path in $\Gamma$ $v$ to $w$. It is said to be symmetric if  its adjacency matrix  $M$ is symmetric: it means that $\Gamma$
could be considered as a non directed multigraph.
% \textit{i.e.} satisfying the condition $(v,w;i)\in E\iff (w,v;i)\in E$.    

For any arbitrary matrix $M$, we denote by $\mathstrut^{t}\! M$ its transpose. A matrix is said to be regular if it is the adjacency matrix of a regular (directed) graph. If $M$ is $d$-regular, then $N=\mathstrut^{t}\! MM$ is symmetric and $d^2$-regular. It means that $N$ can be viewed as
the adjacency matrix of a symmetric $d^2$-regular multigraph.

The main tool is the following discrepancy inequality which is similar to Theorem 9.2.5 of \cite[Chap. 9]{AS}. It will be proved in section \ref{ssp}.

\begin{thm}\label{th1}
Let $\Gamma=(V;E)$ be a $d$-regular (directed) graph with $n$ vertices and $M$ its adjacency matrix. 
%We make the extra condition that  $\mathstrut^{t}\! MM$ is regular.
We denote by $\theta_2$ the largest but the first one eigenvalue of $N=\,\mathstrut^{t}\! MM$. 

Then for any sets of vertices  $S$ and $T$ in $\Gamma$, one has 
\begin{equation}\label{eq0}
\Big|e(S,T)n-|S||T|d\Big|\le n\sqrt{\theta_2 |S||T|},
\end{equation}
where $e(S,T)$ is the number of edges in $\Gamma$ from $S$ to $T$.
\end{thm}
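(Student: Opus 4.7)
The plan is to follow the template of the classical expander mixing lemma (Alon--Spencer, Thm.~9.2.5) but to replace the spectral analysis of $M$, whose eigenvalues need not even be real in the directed setting, by the spectral analysis of the symmetric positive semidefinite matrix $N=\mathstrut^{t}\! MM$. The whole argument is then essentially linear algebra together with Cauchy--Schwarz.

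First I would rewrite $e(S,T)$ as a bilinear form. If $\mathbf{1}_S,\mathbf{1}_T\in\{0,1\}^V$ are the indicator vectors of $S$ and $T$, then counting edges from $S$ to $T$ yields
$$
e(S,T)=\mathstrut^{t}\!\mathbf{1}_S\, M\,\mathbf{1}_T.
$$
Let $\mathbf{j}$ denote the all-ones vector. The $d$-regularity of $\Gamma$ means that both row and column sums of $M$ equal $d$, so $M\mathbf{j}=d\mathbf{j}$ and $\mathstrut^{t}\! M\,\mathbf{j}=d\mathbf{j}$; consequently $N\mathbf{j}=d^2\mathbf{j}$. I would then decompose
$$
\mathbf{1}_S=\frac{|S|}{n}\mathbf{j}+\mathbf{u},\qquad \mathbf{1}_T=\frac{|T|}{n}\mathbf{j}+\mathbf{v},
$$
with $\mathbf{u},\mathbf{v}$ orthogonal to $\mathbf{j}$. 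Expanding the bilinear form and using that the two cross terms vanish (because $\mathstrut^{t}\!\mathbf{j}\, M\mathbf{v}=d\,\mathstrut^{t}\!\mathbf{j}\,\mathbf{v}=0$ and similarly on the other side), I get
$$
e(S,T)=\frac{|S||T|d}{n}+\mathstrut^{t}\!\mathbf{u}\, M\,\mathbf{v}.
$$

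The remaining task is to bound the error $\mathstrut^{t}\!\mathbf{u}\, M\,\mathbf{v}$. By Cauchy--Schwarz,
$$
\bigl|\mathstrut^{t}\!\mathbf{u}\, M\,\mathbf{v}\bigr|\le \|\mathbf{u}\|\cdot\|M\mathbf{v}\|,
$$
and one has the key identity $\|M\mathbf{v}\|^2=\mathstrut^{t}\!\mathbf{v}\,N\,\mathbf{v}$. Since $N$ is symmetric, $\mathbf{R}^n$ admits an orthonormal eigenbasis for $N$; because $\mathbf{j}$ is an eigenvector of $N$ for the eigenvalue $d^2$ and $\mathbf{v}\perp\mathbf{j}$, the vector $\mathbf{v}$ lies in the $N$-invariant orthogonal complement where $N$ has spectral norm at most $\theta_2$. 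Hence $\mathstrut^{t}\!\mathbf{v}\,N\,\mathbf{v}\le\theta_2\|\mathbf{v}\|^2$. Together with the trivial bounds $\|\mathbf{u}\|^2=|S|-|S|^2/n\le|S|$ and similarly $\|\mathbf{v}\|^2\le|T|$, this gives
$$
\bigl|\mathstrut^{t}\!\mathbf{u}\, M\,\mathbf{v}\bigr|\le\sqrt{\theta_2|S||T|},
$$
and multiplying through by $n$ produces exactly \eqref{eq0}.

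The only delicate point, and the step I would double-check, is the claim that $d^2$ is the \emph{largest} eigenvalue of $N$, so that $\theta_2$ really does control the action of $N$ on the orthogonal complement of $\mathbf{j}$. This is where the two-sided regularity (row sums \emph{and} column sums equal to $d$) is essential: without it one cannot conclude that $\mathbf{j}$ is an eigenvector of $N$ at all. Given that, the Perron--Frobenius-type bound $\|N\|\le d^2$ follows from the fact that $N$ is the adjacency matrix of a $d^2$-regular multigraph (as noted in the excerpt), so every other eigenvalue of $N$ is bounded by $d^2$ and the second largest is by definition $\theta_2$. The rest of the proof is routine.
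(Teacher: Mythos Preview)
Your proof is correct and takes essentially the same approach as the paper: both hinge on the bound $\|M\mathbf{v}\|^2=\mathstrut^{t}\!\mathbf{v}N\mathbf{v}\le\theta_2\|\mathbf{v}\|^2$ for $\mathbf{v}\perp\mathbf{j}$ (the paper isolates this as its Lemma~\ref{lem11}) together with Cauchy--Schwarz. The only cosmetic difference is packaging: you expand the bilinear form $\mathstrut^{t}\!\mathbf{1}_S M\mathbf{1}_T$ symmetrically and apply Cauchy--Schwarz to $\mathstrut^{t}\!\mathbf{u}M\mathbf{v}$, whereas the paper decomposes only on the $S$ side, computes $\|Mv\|^2=\sum_{i}(|N_S(i)|-sd)^2$ explicitly, and then applies Cauchy--Schwarz to $\sum_{i\in T}\bigl||N_S(i)|-sd\bigr|$.
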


The main points in  the above theorem are:\\
\indent -- $\mathstrut^{t}\! MM$ is a real symmetric matrix for which we can use 
the spectral theorem, namely its eigenvalue are positive real numbers
and its eigenspace are orthogonal,\\
\indent -- $\mathstrut^{t}\! MM$ is the adjacency matrix of a symmetric
regular multigraph.

\bigskip
For a given subgroup $G$ of $\bFp^*$ and $g:G\to\bFp$ an arbitrary function, we define
$$
\mu(g)=\max_{t}|\{ t=g(x)\,: x\in G \}|.
$$

By the incidence theorem due to Bourgain, Katz and Tao (cf. \cite{BKT}),
we can show that the function $f(x,y)=g(x)(h(x)+y)$ is an expander,
namely $|f(A,B)|\gg |A|^{1+\Delta(\alpha)}$, 
for any $A,B$ such that $|A|,|B|\asymp p^{\alpha}$ with $0<\alpha<1$, whenever $g$ and $gh$ are sufficiently affinely independent: it means that
we can control the number of solutions of the system
\begin{equation}\label{ss1}
\begin{cases}
\mu g(x')-\lambda g(x)=0\\
 \lambda h(x') -\mu  h(x)= 0 \text{ or }1,
\end{cases}
\end{equation}
for any choice of the pair $(\lambda,\mu)$.
A similar expanding property holds for the more general function
$f(x,y)=g(x)(h(x)+k(y))$ where $\mu(k)=O(1)$. 
Unfortunately, the growth exponent  $\Delta(\alpha)$ is rather weak when $\alpha\le 1/2$: in \cite{HR}  the authors have shown an explicit exponent for the Bourgain-Katz-Tao incidence Theorem which yields
 the admissible exponent $\Delta(\alpha)=1/10678$. 
If 
$1/2<\alpha<1$, then by  Vinh's incidence theorem (cf. \cite{Vi}), the growth exponent  
can be chosen equal to $\Delta(\alpha)=\min(1-1/2\alpha,(1/\alpha-1)/2)$ since for $|A|\asymp |B|$
$$
|f(A,B)|\gg \min\left(\frac{|A|^2}{\sqrt{p}},\sqrt{p|A|}\right).
$$

Our aim is to obtain a more uniform result with a conditional better growth exponent under some additional assumption on the size of the product-set or the sumset.  Moreover the admissible functions $f$ are not necessary rational functions,
contrarily to the most studied cases in the literature. 

\medskip
Our first result  which will be proved in section \ref{s2} is the following.
It  generalizes Theorem 2 of \cite{GS}, obtained 
by Fourier analysis. If one compares it with
Theorem 2.6  of \cite{HLS}, we could observe that
the our result concerns also functions $f(x,y)$ in which the variables cannot be additively nor multiplicatively separated in the sense that 
it cannot be written under the forms $u(x)+v(y)$ or $u(x)v(y)$.  

\begin{thm}\label{t1}
 Let $G$ be a subgroup of $\bFp^*=\bFp\setminus\{0\}$ and 
$f(x,y)=g(x)(h(x)+y)$ be defined on $G\times \bFp^*$ where $g,h:G\to \bFp^*$ 
are arbitrary functions.  Put $m=\mu(g\cdot h)$. 
For any sets $A\subset G$ and $B,C\subset \bFp^*$,
we have
$$
|f(A,B)||B\cdot C|\ge \frac18\min\left(\frac{|A||B|^2|C|}{pm^2},
\frac{p|B|}{m}\right).
$$
\end{thm}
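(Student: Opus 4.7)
The plan is to apply Theorem~\ref{th1} to a suitable directed incidence graph on $\bFp\times\bFp$ and to reinterpret the triples $(a,b,c)\in A\times B\times C$ as edges whose multiplicity is controlled by $m=\mu(gh)$.

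Let $\Gamma$ be the directed graph on $V=\bFp\times\bFp$ with an edge from $(x,y)$ to $(\alpha,\beta)$ precisely when $y=\alpha x+\beta$. Each vertex has in- and out-degree equal to $p$, so $\Gamma$ is $p$-regular on $n=p^2$ vertices. The matrix $N={}^{t}\!M\,M$ can be written as $pI+J_{p^2}-I_p\otimes J_p$, and a direct tensor diagonalization in the basis $\{u_i\otimes u_j\}$ of joint eigenvectors of $J_p$ shows that $N$ has eigenvalues $p^2$, $p$ and $0$ with respective multiplicities $1$, $p(p-1)$ and $p-1$. Hence $\theta_2=p$, and Theorem~\ref{th1} yields
\begin{equation*}
e(S,T)\ \leq\ \frac{|S||T|}{p}+\sqrt{p|S||T|}
\end{equation*}
for all $S,T\subseteq V$.

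Set $\phi=gh$ and take
\begin{equation*}
S=\bigl\{(bc,\,f(a,b)):a\in A,\ b\in B,\ c\in C\bigr\},\qquad T=\bigl\{(g(a)/c,\,\phi(a)):a\in A,\ c\in C\bigr\},
\end{equation*}
so that $|S|\leq|f(A,B)|\cdot|B\cdot C|$ and $|T|\leq|A||C|$. For every triple $(a,b,c)\in A\times B\times C$ the point $(bc,f(a,b))$ lies on the line $y=(g(a)/c)x+\phi(a)$, and thus contributes an edge from $S$ to $T$ in $\Gamma$. The decisive fiber estimate is that any incidence $(P,L)\in S\times T$ is produced by at most $m$ such triples: from $\phi(a)=L_y$ one gets at most $m$ admissible values of $a$ by hypothesis, and then $c=g(a)/L_x$ and $b=P_x/c$ are uniquely determined. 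Consequently $e(S,T)\geq |A||B||C|/m$.

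Combining the two bounds, $|A||B||C|/m\leq|S||T|/p+\sqrt{p|S||T|}$, so one of the two right-hand terms is at least $|A||B||C|/(2m)$. Setting $XY=|f(A,B)|\cdot|B\cdot C|$ and using $|S||T|\leq XY\cdot|A||C|$, the first alternative yields $XY\geq p|B|/(2m)$ and the second yields $XY\geq|A||B|^2|C|/(4pm^2)$. In either case
\begin{equation*}
XY\ \geq\ \tfrac18\,\min\!\left(\frac{p|B|}{m},\ \frac{|A||B|^2|C|}{pm^2}\right),
\end{equation*}
which is the claimed inequality. The principal technical obstacle will be the fiber-by-$m$ bound in the third paragraph; this is the single place where the hypothesis $\mu(gh)=m$ intervenes, and it is precisely what converts the generic point--line incidence estimate into the conditional expansion bound announced in Theorem~\ref{t1}.
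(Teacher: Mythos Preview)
Your argument is correct, and it follows a genuinely different route from the paper's.

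The paper constructs a bespoke $(p-2)$-regular directed graph on $\bFp^*\times\bFp^*$ with edge relation $ac=g(b)(b+d)$, writes ${}^{t}\!MM=J+(p-3)I-E$ for a simple $(3p-6)$-regular error matrix $E$, and then bounds $\theta_2\le 4p-9$ via Lemma~\ref{lem2}. Its vertex sets are $S=\{(zg(zh(x))/g(x),\,zh(x))\}$ and $T=\{(f(x,y),\,yz)\}$, and the fiber bound comes from solving the four-equation system \eqref{eqca}. You instead use the standard point--line incidence graph on $\bFp\times\bFp$, obtain the exact value $\theta_2=p$ by an elementary tensor diagonalisation, and encode the problem through points $(bc,f(a,b))$ and lines $(g(a)/c,\phi(a))$; the identity $f(a,b)=\dfrac{g(a)}{c}\cdot(bc)+\phi(a)$ makes the incidence automatic, and the fiber-by-$m$ bound is a one-line consequence of $\mu(gh)=m$.

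What each approach buys: yours is shorter and more transparent---the underlying graph is the familiar one behind Vinh's incidence bound, the spectral computation is exact rather than an upper estimate, and the place where the hypothesis on $\mu(gh)$ enters is immediately visible. The paper's tailored graph, on the other hand, is built so that the same eigenvalue analysis can be reused in Section~\ref{s4} for the more general function $g(x)h(y)(x+y)$ (where the additional factor depending on $y$ prevents a clean rewriting as an affine line in your sense). So the custom construction pays dividends later, while your argument gives the cleanest path to Theorem~\ref{t1} itself.
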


Letting $C=A$ in this theorem, we get 
$$
|A|,|B|\asymp p^{\alpha}\Rightarrow \max(|f(A,B)|,|A\cdot B|)\gg|A|^{1+\Delta(\alpha)},
$$
where 
$\Delta(\alpha)=\min(1-1/2\alpha,(1/\alpha-1)/2)$.

Observe that the condition on $g$ and $h$ is  different from the one requested when applying
incidence inequality. In fact our results may hold and in a same time 
$f$ is not an expander. Moreover the validity of this result  weakly  depends
on the functions $g$ and $h$. For instance, our result holds
for $f(x,y)=x(1+y)$, which is not an expander, as it can be easily observed.
%has been
%already proved by Garaev and Shen in \cite{GS}.
On an other hand, in the restricted case $A=B$, Garaev and Shen proved in \cite{GS} that $|A\cdot(A+1)|\gg \min(\sqrt{p|A|},|A|^2/\sqrt{p})$. For small $|A|$, that is $|A|\le \sqrt{p}$,
they avoided the use of Bourgain-Katz-Tao inequality 
and obtained the bound $|A\cdot(A+1)|\gg|A|^{106/105+o(1)}$
by an argument of Glibichuk and Konyagin (cf. \cite{GK}), some ingredients from \cite{KS} and \cite{BG} and the use of an explicit Balog-Szemer\'edi-Gowers type estimate.

Furthermore for  $f(x,y)=1+xy=x(1/x+y)$, then clearly $f$ is not an expander and Theorem \ref{t1} holds but gives a trivial bound:
 in such a case $|f(A,B)|$ and $|B\cdot C|$ 
can be simultaneously small, namely if $A=B=C$ is a geometric progression in $\bFp^*$. 

By applying this bound with $k(B)$ instead of $B$, we get a similar bound for
the function $f'(x,y)=g(x)(h(x)+k(y))$ if we assume $k$ to be one to one, since,
in that case $|k(B)|=|B|$. With no additional assumption on $k$, we only have
$$
|f'(A,B)||k(B)\cdot C|\ge \frac18\min\left(\frac{|A||B|^2|C|}{p\mu(gh)^2\mu(k)^2},
\frac{p|B|}{\mu(gh)\mu(k)}\right),
$$
since $|k(B)|\ge|B|/\mu(k)$. If one takes $C=k(C')$ with $k(x)=x^j$, we obtain
a lower bound involving the product $B\cdot C'$.

\medskip
The next result is the additive counterpart of Theorem \ref{t1}. It will
be proved in section \ref{s3} by considering the simple
sum-product graph (cf. \cite{So1}).  

\begin{thm}\label{t2}
Let $G$ be a subgroup of $\bFp^*=\bFp\setminus\{0\}$ and $f(x,y)=g(x)(h(x)+y)$ be defined on $G\times G$ where $g$ and $h$ are arbitrary 
functions from $G$ into $\bFp^*$.
 Put $m=\mu(g)$.
For any $A\subset G$, $B,C\subset\bFp^*$, we have
$$
|f(A,B)||B+C|\gg \min\left(\frac{p|B|}m,\frac{|A||B|^2|C|}{pm^2}
\right).
$$
\end{thm}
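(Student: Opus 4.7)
The plan is to reproduce the strategy of Theorem~\ref{t1} with an additive shift in place of the multiplicative one, so that the product set $B\cdot C$ of Theorem~\ref{t1} is replaced by the sumset $B+C$. The key tool remains the discrepancy estimate \eqref{eq0} of Theorem~\ref{th1}, now applied to (a variant of) the simple sum-product graph of Solymosi~\cite{So1}.

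Concretely, I would consider a regular directed graph $\Gamma$ on a vertex set $V\subset\bFp^{r}$ in which each edge corresponds to a parameter $b\in\bFp^{*}$ together with an affine bijection $\sigma_b:V\to V$ of the form $(u,v,\ldots)\mapsto(u+b\,\pi(u,v),v+b,\ldots)$. The auxiliary map $\pi$ is chosen so that, after encoding a pair $(a,c)\in A\times C$ as a vertex $\phi(a,c)\in V$, one has $\sigma_b(\phi(a,c))=(f(a,b),b+c,\ldots)$; this forces $\pi$ to reproduce the factor $g(a)$ appearing in the identity $f(x,b)=(gh)(x)+b\,g(x)$, and it is for this reason that the relevant multiplicity here is $m=\mu(g)$ rather than $\mu(gh)$ as in Theorem~\ref{t1}. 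A standard Weil/Kloosterman-type character sum estimate, parallel to the one underlying Theorem~\ref{t1}, then yields $\theta_2\ll p$ for the second eigenvalue of $\mathstrut^{t}\! MM$.

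Setting $S=\phi(A\times C)$ (so $|S|\leq|A||C|$) and letting $T$ be the image of $A\times B\times C$ under $(a,b,c)\mapsto\sigma_b(\phi(a,c))$, one has $|T|\ll|f(A,B)||B+C|$ after projecting away any auxiliary coordinate. Each triple $(a,b,c)\in A\times B\times C$ produces an edge of $\Gamma$ from $\phi(a,c)\in S$ to $\sigma_b(\phi(a,c))\in T$, and the at-most-$m$-to-one character of $\phi$ (inherited from $g$) gives $e(S,T)\geq|A||B||C|/m$. Feeding this into \eqref{eq0} leads to
\[
\frac{|A||B||C|}{m}\;\ll\;\frac{|S||T|}{p}+\sqrt{p\,|S||T|}.
\]
If the first summand dominates on the right one gets $|S||T|\gg p|A||B||C|/m$, which combined with $|S|\leq|A||C|$ yields $|f(A,B)||B+C|\gg p|B|/m$; if the second summand dominates, then $|S||T|\gg|A|^{2}|B|^{2}|C|^{2}/(pm^{2})$, hence $|f(A,B)||B+C|\gg|A||B|^{2}|C|/(pm^{2})$. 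These two cases together produce exactly the minimum asserted in Theorem~\ref{t2}.

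The main obstacle will be the construction of the graph $\Gamma$: one must arrange for $\sigma_b$ to be a genuine bijection of $V$ (so that $\Gamma$ is regular in both in- and out-degree) and, simultaneously, for $\mathstrut^{t}\! MM$ to possess a spectral gap of the required order $\theta_2\ll p$, despite the arbitrariness of the functions $g$ and $h$. Once this is done, the shift from $\mu(gh)$ in Theorem~\ref{t1} to $\mu(g)$ in the present theorem becomes transparent from the identity $f(x,b)=(gh)(x)+b\,g(x)$: here $b$ is \emph{added} (rather than multiplied) to $h(x)$, so that it is the factor $g(x)$ which now multiplies $b$ in the additive shift $\sigma_b$, and collisions in the encoding $\phi$ are controlled by the level sets of $g$ alone.
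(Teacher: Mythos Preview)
Your overall shape is right---expander mixing lemma, two vertex sets $S,T$ with $|S||T|$ controlling $|f(A,B)||B+C|\cdot|A||C|$, and $e(S,T)\ge|A||B||C|/m$---and your final case analysis from \eqref{eq0} is exactly how the paper concludes. But the specific construction you propose is over-engineered in a way that creates the very obstacle you flag at the end, and the paper avoids it completely.

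The gap is this: you want to build a graph whose edges $\sigma_b$ depend on $g$ (so that $\pi$ ``reproduces the factor $g(a)$''), and then appeal to a ``Weil/Kloosterman-type character sum estimate'' for $\theta_2\ll p$. But $g,h:G\to\bFp^*$ are \emph{arbitrary} functions, not polynomials or characters, so no exponential-sum machinery applies; and indeed neither Theorem~\ref{t1} nor the present theorem uses any character sums at all. The spectral gap in Section~\ref{s2} comes from the elementary identity $\mathstrut^{t}\! MM=J+(p-3)I-E$ with $E$ a simple $(3p-6)$-regular matrix, bounded via Lemma~\ref{lem2}. If your graph genuinely depends on the arbitrary function $g$, you have no mechanism for a spectral gap.

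The paper's manoeuvre is much simpler: use the \emph{fixed} sum-product graph on $\bFp^*\times\bFp^*$, where $(a,b)\sim(c,d)$ iff $ac=b+d$, whose $\theta_2\ll p$ is already established (it is the $g\equiv1$ case of the Section~\ref{s2} computation, or Solymosi's original). The arbitrary functions $g,h$ enter \emph{only} through the choice of the vertex sets:
\[
S=\{(g(x)(h(x)+y),\,y+z):(x,y,z)\in A\times B\times C\},\qquad
T=\{(g(x)^{-1},\,h(x)-z):(x,z)\in A\times C\}.
\]
Then $|S|\le|f(A,B)||B+C|$, $|T|\le|A||C|$, and the pair $(S_{x,y,z},T_{x,z})$ is an edge because $g(x)(h(x)+y)\cdot g(x)^{-1}=(y+z)+(h(x)-z)$. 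The system $g(x)=w$, $h(x)-z=v$, $y+z=t$ has at most $\mu(g)$ solutions $(x,y,z)$, which is precisely why $m=\mu(g)$ replaces $\mu(gh)$ here. With this choice your ``main obstacle'' evaporates: the graph is universal, the spectral gap is free, and only the bookkeeping of $S,T$ reflects $f$.
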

By letting $C=A$, this yields  
$$
|A|,|B|\asymp p^{\alpha}\Rightarrow
\max(|f(A,B)|,|A+B|)\gg|A|^{1+\Delta(\alpha)},
$$
where 
$\Delta(\alpha)=\min(1-1/2\alpha,(1/\alpha-1)/2)$.

Notice that Theorem 6 in \cite{BT} does not cover such a function like in Theorems \ref{t1} and \ref{t2}. Observe also that when $g$ and $h$ are polynomials and $g$ is non constant, Vu's estimate in \cite{Vu} or its generalization in \cite{HLS} (cf. Theorem 2.9) would lead to a similar statement with a weaker exponent $\Delta(\alpha)=\min((2-1/\alpha)/4,(1/\alpha-1)/3)$.

\medskip
It is not known whether or not the function $f(x,y)=xy(x+y)$ is an expander in $\bFp$ while in the same time it can be shown to be an expander in $\mathbb{R}$ (see section \ref{s6}).
This question is seemingly hard to tackle with actually known tools. But somewhat surprisingly, the related functions
$(x+y)/xy=1/x+1/y$ and $xy+x+y=(1+x)(1+y)-1$ are plainly not  expanders.

Our aim
is to obtain a conditional expanding lower bound as in Theorem \ref{t1}
for this kind of function. We state such a result which involves
in addition $k$-th powers residues. For any function $h$, we denote $h_u(x)=h(ux)$.

\begin{thm}\label{t3}
Let $f(x,y)=g(x)h(y)(x^k+y^k)$ where $g,h:G\to\bFp^*$ are functions defined on some
subgroup $G$ of $\bFp^*$. We assume that for any fixed $z\in G$, $g(xz)/g(x)$ 
and $h(xz)/h(x)$ take $O(1)$  different values when $x\in G$ and that
$\max_u\mu(g\cdot h_u\cdot \mathrm{id})=O(1)$.
Then for
any $A,B,C\subset G$, one has
$$
|f(A,B)||A\cdot C||B\cdot C|\gg  
\min\left(\frac{|A|^2|B|^2|C|}{p},p|A||B|\right).
$$ 
\end{thm}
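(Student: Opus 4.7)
My plan is to apply the discrepancy inequality of Theorem~\ref{th1} to a carefully chosen regular directed graph, in the same spirit as the proofs of Theorems~\ref{t1} and~\ref{t2}. The ingredient that makes $f(x,y) = g(x)h(y)(x^k+y^k)$ amenable to such an argument is the diagonal homogeneity $(cx)^k + (cy)^k = c^k(x^k+y^k)$, which allows the $C$-action $(a,b) \mapsto (ca,cb)$ on pairs to be lifted to a coherent action on the value of $f$, modulo a bounded-multiplicity correction coming from the hypotheses on $g$ and $h$.

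I would construct a $|C|$-regular directed graph $\Gamma$ on a vertex set $V \subset \bFp^*\times\bFp^*$ of order $n \asymp p^2$, with an edge from $(u,v)$ to $(u',v')$ whenever $(u',v') = (cu,cv)$ for some $c \in C$, and then fiber the analysis over the values $s \in f(A,B)$: setting $S_s = \{(a,b)\in A\times B : f(a,b) = s\}$ and letting $T_s \subset (A\cdot C)\times(B\cdot C)$ be the corresponding $f$-fiber, the homogeneity together with the hypotheses on $g,h$ ensure that every edge leaving $S_s$ ends in $T_s$, so $e(S_s,T_s) \gg |S_s||C|$. Theorem~\ref{th1} gives the matching upper bound $|S_s||T_s||C|/n + n\sqrt{\theta_2|S_s||T_s|}$, with $\theta_2 \ll |C|$ expected from a Plancherel/character-sum computation analogous to the one underlying Solymosi's sum--product graph in~\cite{So1}. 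Summing over $s$, using $\sum_s |S_s|=|A||B|$ and $\sum_s|T_s|\le|A\cdot C||B\cdot C|$, together with Cauchy--Schwarz to pass from $\sum_s|S_s|^2$ to $(|A||B|)^2/|f(A,B)|$, should yield a lower bound on $|f(A,B)|\cdot|A\cdot C|\cdot|B\cdot C|$ in which the two terms of $\min(|A|^2|B|^2|C|/p,\,p|A||B|)$ correspond respectively to the main and error terms of~(\ref{eq0}).

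The main obstacle is the collision count at the heart of the edge identification. Two triples $(a,b,c),(a',b',c') \in A\times B\times C$ produce the same edge-endpoint in $T_s$ precisely when $ac=a'c'$, $bc=b'c'$ and $f(a,b)=f(a',b')$; solving the first two equations forces $(a',b',c')=(\lambda a,\lambda b,c/\lambda)$ for some $\lambda \in G$, whereupon the third collapses to $\lambda^k g(\lambda a)h(\lambda b)/\bigl(g(a)h(b)\bigr) = 1$. The hypotheses $\max_u \mu(g\cdot h_u\cdot\mathrm{id}) = O(1)$ and the $O(1)$-multiplicity of $g(zx)/g(x)$ and $h(zx)/h(x)$ for fixed $z$ are designed precisely to bound the number of admissible $\lambda$ per pair $(a,b)$ by a constant, which is exactly what is needed to turn the incidence count into a near-injective one. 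Coupling this multiplicity control with the eigenvalue estimate $\theta_2 \ll |C|$ on the right vertex set is the delicate technical part; once both are in place, the remainder is a bookkeeping exercise paralleling the proofs of Theorems~\ref{t1} and~\ref{t2}.
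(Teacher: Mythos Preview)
Your proposed graph has a fatal structural defect: the diagonal $C$-action $(u,v)\mapsto(cu,cv)$ preserves the ratio $v/u$, so your $\Gamma$ decomposes as a disjoint union of $p-1$ copies of the Cayley graph $\mathrm{Cay}(\bFp^*,C)$, one for each line through the origin. Consequently the top eigenvalue $|C|^2$ of $\mathstrut^{t}\!MM$ occurs with multiplicity $p-1$, so $\theta_2=|C|^2=d^2$ and the discrepancy bound~(\ref{eq0}) collapses to the triviality $\bigl|e(S,T)-|S||T|d/n\bigr|\le d\sqrt{|S||T|}$. No Plancherel computation can help here: even if $C$ happened to have small nontrivial character sums, the $(p-1)$-fold repetition of the trivial eigenvalue already kills the spectral gap. (Note too that Lemma~\ref{lem10}, on which Theorem~\ref{th1} rests, explicitly assumes connectedness.)

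The paper's argument is quite different and does not fiber over the values of $f$. It uses the graph on $\bFp^*\times\bFp^*$ with edge relation $ac=f(b,d)=g(b)h(d)(b+d)$; this is the graph of Section~\ref{s2} with $g'$ replaced by $h$, hence fixed (independent of $A,B,C$), $(p-2)$-regular, connected, and with $\theta_2\le 4p-9$ via the identity $\mathstrut^{t}\!MM=J+(p-3)I-E$ established there. Theorem~\ref{th1} is then applied \emph{once}, to
\[
S=\bigl\{(f(x,y),\,yz):(x,y,z)\in A\times B\times C\bigr\},\qquad
T=\Bigl\{\bigl(z\,\tfrac{g(xz)h(yz)}{g(x)h(y)},\,xz\bigr):(x,y,z)\in A\times B\times C\Bigr\}.
\]
The homogeneity $(xz)^k+(yz)^k=z^k(x^k+y^k)$ is what makes each such pair adjacent in $\Gamma$; the hypothesis on $g(xz)/g(x)$ and $h(xz)/h(x)$ gives $|T|\ll|C|\,|A\cdot C|$, and the hypothesis $\max_u\mu(g\cdot h_u\cdot\mathrm{id})=O(1)$ gives $e(S,T)\gg|A||B||C|$. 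Combined with $|S|\le|f(A,B)|\,|B\cdot C|$, inequality~(\ref{eq0}) then yields the claim directly, with no summation over fibers and no Cauchy--Schwarz. Your collision analysis in the last paragraph was along the right lines; it is the choice of graph that needs to change.
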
The above theorem will be proved in section \ref{s4}.
The condition on $g$ and $h$ in the theorem looks unusual. For instance, $g$ and $h$ could be 
monomial functions. Other examples are given by functions $\tau^{\alpha(x)}x^k$ where 
$\tau\in\bFp^*$ has order $O(1)$ and $\alpha$ is an arbitrary function. If $g$ and $h$ are both
multiplicative homomorphisms, then, we can improve this result (see inequality  \eqref{nnn1}). Theorem \ref{t3} applies to the polynomial $f(x,y)=xy(x^k+y^k)$ where $k$ is a positive integer.
We will also consider in section the case of small subsets
of $\mathbb{F}_p$ (cf. Theorems \ref{t7} and \ref{t8}).

\medskip
Let $f(x,y)$ and $w(x)$  defined for $x,y\in \bFp^*$. We define the multiplicative  $w$-shifted function of $f$  by $P_w(f)(x,y)=w(x)f(x,y)$. We have

\begin{thm}\label{t5} Let $f(x,y)$ as in Theorem \ref{t3} and $A\subset \bFp^*$. Let $w$ be a function such that $\mu(w)=O(1)$. 
Then for any $1/2<\alpha<1$, there exists $\Delta=\Delta(\alpha)>0$ 
 such that
\begin{align*}
&|A|\asymp p^{\alpha}\Rightarrow \max(|f(A,A)|,|P_w(f)(A,A)|)\gg |A|^{1+\Delta}.
%\\
%&|A|\asymp p^{\alpha}\Rightarrow \max(|f(A,A)|,|S_w(f)(A,A)|)\gg |A|^{1+\Delta}.
\end{align*}
\end{thm}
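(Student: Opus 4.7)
\emph{Plan.} I argue by contradiction, assuming both $|f(A,A)|\le|A|^{1+\Delta}$ and $|P_w(f)(A,A)|\le|A|^{1+\Delta}$ for a small parameter $\Delta>0$ to be fixed at the end.

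\smallskip
\emph{Step 1 (lower bound on $|A\cdot A|$).} Applying Theorem~\ref{t3} to $f$ with $B=C=A$ yields
$$
|f(A,A)|\,|A\cdot A|^2\;\gg\;\min\bigl(|A|^5/p,\;p|A|^2\bigr).
$$
For $|A|\asymp p^\alpha$ with $1/2<\alpha<1$, the hypothesis $|f(A,A)|\le|A|^{1+\Delta}$ then forces $|A\cdot A|\gg|A|^{1+\delta_0}$ for some explicit $\delta_0=\delta_0(\alpha,\Delta)>0$, provided $\Delta$ is sufficiently small.

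\smallskip
\emph{Step 2 (multiplicative structure from $P_w(f)$).} The bound $|P_w(f)(A,A)|\le|A|^{1+\Delta}$ implies that the collision energy
$$
\#\bigl\{(a_1,b_1,a_2,b_2)\in A^4:\;w(a_1)f(a_1,b_1)=w(a_2)f(a_2,b_2)\bigr\}
$$
is at least $|A|^{3-\Delta}$. Since $\mu(w)=O(1)$, the value $w(a)$ essentially determines $a$; after pigeonholing on the ratio $w(a_1)/w(a_2)$ and on appropriate fibers, one converts these collisions into a large multiplicative energy of $A$ itself. An explicit Balog--Szemer\'edi--Gowers statement of the type used by Garaev--Shen in \cite{GS} and by the authors of \cite{BT} then produces a subset $A'\subseteq A$ with $|A'|\gg|A|^{1-c\Delta}$ satisfying
$$
|A'\cdot A'|\;\ll\;|A'|^{1+c'\Delta},
$$
for some absolute constants $c,c'>0$.

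\smallskip
\emph{Step 3 (contradiction).} Reapplying Theorem~\ref{t3} to the set $A'$ with $B=C=A'$ gives
$$
|f(A',A')|\;\gg\;\frac{\min(|A'|^5/p,\,p|A'|^2)}{|A'\cdot A'|^2}\;\gg\;\min\bigl(|A'|^{3-2c'\Delta}/p,\;p|A'|^{-2c'\Delta}\bigr).
$$
Since $A'\subseteq A$, one has $|f(A',A')|\le|f(A,A)|\le|A|^{1+\Delta}$. Plugging in $|A|\asymp p^\alpha$ and $|A'|\asymp p^{\alpha(1-c\Delta)}$, the two estimates are incompatible once $\Delta$ is chosen small enough in terms of $\alpha\in(1/2,1)$. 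Solving the resulting linear inequalities in $\Delta$ produces an explicit admissible $\Delta(\alpha)>0$.

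\smallskip
\emph{Main obstacle.} The delicate step is Step~2. The collisions of $P_w(f)$ mix the multiplicative twist $w$ with the additive kernel $x^k+y^k$, so they cannot be fed directly into a multiplicative BSG applied to $A$. The extraction has to first use $\mu(w)=O(1)$ in order to ``strip'' the $w$-factor from the collision relation $w(a_1)f(a_1,b_1)=w(a_2)f(a_2,b_2)$, at the price of a controlled loss in the resulting subset, before invoking the explicit BSG of \cite{GS,BT}. Once this disentanglement is done, the remainder of the argument is routine bookkeeping with Theorem~\ref{t3} applied on $A'$.
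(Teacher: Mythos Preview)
Your overall architecture --- use Balog--Szemer\'edi--Gowers to pass to a subset with small product set, then reapply Theorem~\ref{t3} on that subset --- is exactly the route the paper takes. Step~1 is never used later and can be dropped; Step~3 is fine and matches the paper.

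The genuine gap is Step~2, which you yourself flag as the ``main obstacle'' but do not resolve. The collision relation $w(a_1)f(a_1,b_1)=w(a_2)f(a_2,b_2)$, even after pigeonholing on $w(a_1)/w(a_2)$, only produces many coincidences of the form $f(a_1,b_1)=\lambda f(a_2,b_2)$; that is information about $f(A,A)$, not multiplicative energy of $A$. There is no evident way to ``strip'' this down to $E^{\times}(A)$ as you suggest.

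The paper's fix is short and avoids the obstacle entirely. First reduce to $w=\mathrm{id}$ by replacing $A$ with $w(A)$ (legitimate since $\mu(w)=O(1)$ gives $|w(A)|\gg|A|$). Then observe that $P(f)(A,A)=\{a\cdot f(a,b):a,b\in A\}$ is precisely the \emph{restricted product} $A\overset{\Gamma}{\cdot} f(A,A)$ along the bipartite graph
\[
\Gamma=\{(a,v)\in A\times f(A,A):v\in f(a,A)\}.
\]
This graph has $\gg|A|^2$ edges between vertex sets of sizes $|A|$ and $|f(A,A)|\le K|A|$, and the restricted product has size $|P(f)(A,A)|\le K|A|$. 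Balog--Szemer\'edi--Gowers (in the form of \cite[Theorem~2.29]{TV}) then directly yields $A_1\subset A$, $B_1\subset f(A,A)$ with $|A_1|\gg|A|/K$ and $|A_1\cdot B_1|\ll K^{9}|A_1|$; Ruzsa's triangle inequality gives $|A_1\cdot A_1|\ll K^{18}|A_1|$. From there your Step~3 goes through verbatim, and one obtains the explicit exponent $\Delta(\alpha)=\min(2-1/\alpha,\,1/\alpha-1)/40$.
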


In a similar way, we may define the additive $w$-shifted function of $f$ by
$S_w(f)(x,y)=w(x)+f(x,y)$. We have

\begin{thm}\label{t6} Let $G$ and  $f(x,y)=g(x)(h(x)+y)$  with
$g$ and $h$ as in  Theorem \ref{t2} and $A\subset G$. Let $w$ such that $\mu(w)=O(1)$. 
Then for any $1/2<\alpha<1$, there exists $\Delta=\Delta(\alpha)>0$ 
 such that
\begin{align*}
&|A|\asymp p^{\alpha}\Rightarrow \max(|f(A,A)|,|S_w(f)(A,A)|)\gg 
|A|^{1+\Delta}.
\end{align*}
\end{thm}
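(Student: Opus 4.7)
The plan is to imitate the argument used for Theorem \ref{t5}, replacing the multiplicative shift by the additive one and substituting Theorem \ref{t2} for Theorem \ref{t3}. Suppose for contradiction that both $|f(A,A)| \leq K|A|$ and $|S_w(f)(A,A)| \leq K|A|$ with $K = |A|^{\Delta}$ for a small $\Delta = \Delta(\alpha) > 0$ to be determined at the end. The goal is to force one of these quantities to exceed $K|A|$ via Theorem \ref{t2} after extracting, by a BSG step, a large substructure on which the required sumset is small.

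The first step is to exploit the identity $S_w(f)(x,y) - f(x,y) = w(x)$ together with the hypothesis $\mu(w)=O(1)$ to produce a large additive energy. Writing $F=f(A,A)$, $W=w(A)$, $S=S_w(f)(A,A)$, the injectivity of $y\mapsto f(x,y)$ for each fixed $x$, combined with $\mu(w)=O(1)$, shows that the $|A|^2$ pairs $(x,y)\in A\times A$ yield $\gg|A|^2$ distinct triples $(u,v,s)\in F\times W\times S$ with $u+v=s$. Since $|S|\le K|A|$, Cauchy--Schwarz gives
\begin{equation*}
E(F,W)\;\gg\;|A|^4/|S|\;\gg\;|A|^3/K.
\end{equation*}

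An explicit Balog--Szemer\'edi--Gowers theorem, in the form used in \cite{GS} and \cite{BT}, then produces subsets $F^\ast\subset F$, $W^\ast\subset W$ with $|F^\ast|,|W^\ast|\gg|A|/K^{c}$ and $|F^\ast+W^\ast|\ll K^{c}|A|$ for some absolute constant $c$; the Pl\"unnecke--Ruzsa inequality upgrades this to $|W^\ast+W^\ast|\ll K^{c'}|A|$. Because $\mu(w)=O(1)$, the preimage $A^\ast:=w^{-1}(W^\ast)\cap A$ satisfies $|A^\ast|\asymp|W^\ast|\gg|A|/K^{c}$. Plugged into Theorem \ref{t2}, and using $|f(A^\ast,A^\ast)|\le|f(A,A)|\le K|A|$, this should force a sumset such as $|A^\ast+A^\ast|$ to be polynomially larger than $|A|$; but we will also need an \emph{upper} bound on that sumset, extracted from the BSG structure, to reach a contradiction. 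Once this is obtained, Theorem \ref{t2} with $A\to A^\ast$, $B=C=A^\ast$ delivers $|f(A^\ast,A^\ast)|\gg p^{\min(1,3\alpha-1)}/K^{O(1)}$, which for $1/2<\alpha<1$ is polynomially larger than $K|A|$ as soon as $\Delta<\min(1/\alpha-1,2-1/\alpha)/O(1)$.

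The critical obstacle is precisely the transfer in the previous step: the BSG output controls $|W^\ast+W^\ast|$ but \emph{not} directly $|A^\ast+A^\ast|$, because the map $w$ is arbitrary and need not respect addition. Bridging this gap is the heart of the argument. There are two natural routes: either use the bounded multiplicity of $w$ together with a secondary Pl\"unnecke-type argument internal to $A$ to relate the doubling of $A^\ast$ to that of $w(A^\ast)$, or apply a variant of Theorem \ref{t2} in which the sumset hypothesis can be stated directly on $w(A^\ast)$ rather than on $A^\ast$ (for instance by choosing $B=A^\ast$, $C=w(A^\ast)$ in the theorem and exploiting that $|A^\ast+w(A^\ast)|$ can be bounded using $|F^\ast+W^\ast|$ via $F^\ast\subset f(A,A)$ and the structure of $f$). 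This step dictates the final admissible value of $\Delta(\alpha)$.
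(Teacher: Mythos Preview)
Your overall strategy---BSG to extract a subset with small additive doubling, then feed it into Theorem~\ref{t2}---is exactly the paper's. But the proposal is not a complete proof: the step you yourself flag as the ``critical obstacle'' is a genuine gap, and neither of the two routes you sketch actually closes it. For an arbitrary function $w$ with $\mu(w)=O(1)$ there is simply no reason why small doubling of $W^\ast=w(A^\ast)$ should imply anything about $|A^\ast+A^\ast|$; and your second suggestion, bounding $|A^\ast+w(A^\ast)|$ through $|F^\ast+W^\ast|$, does not go through either, since $F^\ast\subset f(A,A)$ is unrelated to $A^\ast$.

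The paper sidesteps this obstacle completely, and it does so \emph{before} the BSG step rather than after. Exactly as in the proof of Theorem~\ref{t5}, one first reduces to the case $w(x)=x$: replace $A$ by $w(A)$ and $f$ by $\tilde f(u,y)=g(w^{-1}(u))(h(w^{-1}(u))+y)$, so that $f(A,A)=\tilde f(w(A),A)$ and $S_w(f)(A,A)=S_{\mathrm{id}}(\tilde f)(w(A),A)$, with $|w(A)|\asymp|A|$ since $\mu(w)=O(1)$. Once $w=\mathrm{id}$, the graph
\[
\Gamma=\{(u,v)\in A\times f(A,A):v\in f(u,A)\}
\]
satisfies $|\Gamma|\gg|A|^2$ and $A\overset{\Gamma}{+}f(A,A)\subset S_{\mathrm{id}}(f)(A,A)$, so BSG yields $A_1\subset A$ (not merely $A_1\subset w(A)$) with $|A_1|\ge C_1|A|/K$ and $|A_1+A_1|\le C_4K^{18}|A_1|$. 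Now Theorem~\ref{t2} applies directly to $A_1$ and gives \eqref{d1}, \eqref{d2}, \eqref{d3} verbatim. The point is that the change of variable absorbs $w$ into the definition of the function, so the subset produced by BSG already lives in the right set and no transfer is needed.
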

Both of these results are proved in section \ref{s7}.

\medskip
 By a geometrical approach coming from \cite{So2},
we will obtain a conditional bound  for $f(x,y)=xy(x^k+y^k)$ in the real numbers
for arbitrary $k\in\mathbb{R}\setminus\{0\}$
(cf. Proposition \ref{pp71}).
Finally thanks to an appropriate extension of Szemer\'edi-Trotter Theorem
to curves (see \cite{TV}, Theorem 8.10) we will finally obtain
in section \ref{s6} expanding results for $f(x,y)=xy(x+y)$
in the real numbers
(cf. Proposition \ref{pp73}).

\section{\bf Spectral properties of regular graphs}\label{ssp}

In this section, we collect some known results. 
The assumption on the regularity of $\mathstrut^{t}\! MM$
is the key point to ensure that analogous properties
of the sum-product graph described by Solymosi still hold.
We first recall a result giving spectral properties for symmetric regular multigraphs. Namely it identifies
strictly the largest eigenvalue in absolute value and also its associated eigenspace. For the sake of completeness we
include the proofs.

A graph is said to be simple if the coefficients of its adjacency matrix
are  0 or 1.
The adjacency matrix of a simple  $d$-regular graph is called a simple $d$-regular matrix.

\begin{lem}\label{lem10}
Let $n,d\ge2$ be integers.
Let $M$ be the adjacency $n\times n$-square matrix of a 
%symmetric 
$d$-regular connected multigraph $\Gamma$.
Then $d$ is the largest eigenvalue (in absolute value) of $M$ and 
its eigenspace is the line $\mathbb{C}\mathbf{1}$. 

Moreover for any eigenvalue $\lambda$ different from
$d$, any eigenvector $v$ associated to $\lambda$ is orthogonal to the $n$-vector $\mathbf1$.
\end{lem}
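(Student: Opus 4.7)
The plan is to carry out the standard Perron--Frobenius style argument for nonnegative matrices, exploiting the fact that under the paper's convention a $d$-regular adjacency matrix has both row sums and column sums equal to $d$. First, the row condition gives $M\mathbf{1}=d\mathbf{1}$, so $d$ is a right eigenvalue with eigenvector $\mathbf{1}$; dually, the column condition yields $\mathstrut^{t}\!\mathbf{1}\,M=d\,\mathstrut^{t}\!\mathbf{1}$, so $\mathbf{1}$ is also a left eigenvector for $d$. The remaining task is to show that no other eigenvalue reaches modulus $d$, that the associated (right) eigenspace is exactly $\mathbb{C}\mathbf{1}$, and that eigenvectors for other eigenvalues annihilate $\mathbf{1}$.

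For the upper bound, I would pick an arbitrary eigenvalue $\lambda$ with eigenvector $v=(v_1,\dots,v_n)\neq 0$ and choose an index $i$ with $|v_i|=\max_j|v_j|>0$. Reading $Mv=\lambda v$ at coordinate $i$ and applying the triangle inequality together with nonnegativity of $M$ yields
$$|\lambda|\,|v_i|=\Big|\sum_j M_{ij}v_j\Big|\le\sum_j M_{ij}|v_j|\le|v_i|\sum_j M_{ij}=d|v_i|,$$
so $|\lambda|\le d$.

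To identify the $d$-eigenspace, assume $Mv=dv$ and normalise so that the coordinate of maximal modulus satisfies $v_i=1$. The identity $d=\sum_j M_{ij}v_j$ combined with $|v_j|\le 1$ forces, upon taking real parts, $\sum_j M_{ij}\,\mathrm{Re}(v_j)=d=\sum_j M_{ij}$, hence $\mathrm{Re}(v_j)=1$ and therefore $v_j=1$ for every out-neighbour $j$ of $i$ (i.e.\ each $j$ with $M_{ij}>0$). Iterating the same argument at each such $j$ and using the connectedness hypothesis---which, as defined in the excerpt, provides a directed path from $i$ to any vertex of $\Gamma$---the value $1$ propagates along this path to the whole vertex set, so $v=\mathbf{1}$.

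Finally, for any eigenvalue $\lambda\neq d$ with eigenvector $v$, I would combine the left-eigenvector identity with $Mv=\lambda v$:
$$d\,\mathstrut^{t}\!\mathbf{1}\,v=\mathstrut^{t}\!\mathbf{1}\,(Mv)=\lambda\,\mathstrut^{t}\!\mathbf{1}\,v,$$
so $(d-\lambda)\,\mathstrut^{t}\!\mathbf{1}\,v=0$ forces $\mathstrut^{t}\!\mathbf{1}\,v=0$, i.e.\ $v\perp\mathbf{1}$. The only delicate step is the propagation in the equality case: one must verify that the implication ``$v\equiv 1$ at $i$ $\Rightarrow$ $v=1$ at every out-neighbour of $i$'' can be chained along a directed walk, which is exactly the content of the strong-connectedness assumption made in the excerpt. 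Everything else reduces to a careful bookkeeping of the triangle inequality and the duality between row sums and column sums.
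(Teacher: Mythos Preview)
Your proof is correct and follows essentially the same Perron--Frobenius argument as the paper: row/column regularity gives $|\lambda|\le d$ and the orthogonality $\sum_j v_j=0$, and connectivity pins down the $d$-eigenspace. The only differences are cosmetic: the paper obtains $|\lambda|\le d$ by summing over all rows rather than inspecting a single maximum-modulus coordinate, and it first treats simple graphs before passing to multigraphs, whereas your argument handles the general nonnegative-entry case in one pass.
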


\begin{proof}\mbox{}\\
1) We first prove the result for simple graphs.
In that case  $M$ is a simple $d$-regular matrix.
It is clear that $d$ is one of its eigenvalue and that the $n$-vector $\mathbf1=(1,1,\dots,1)$ is an associated eigenvector. Let $v$ be an eigenvector for another
eigenvalue $\lambda$. Then
\begin{equation}\label{gg1}
\sum_{k=1}^d v_{jk}=\lambda v_j,\quad j=1,\dots,n,
\end{equation}
with the property that each $v_i$ appears exactly $d$ times among the elements
$v_{jk}$, $j=1,\dots,n$, $k=1,\dots,d$,
since $M$ is $d$-regular.

By summing, we obtain
$$
d\sum_{j=1}^nv_j=\lambda\sum_{j=1}^nv_j.
$$
Hence either $\lambda=d$ or $\sum_{j=1}^nv_j=0$. In this
later case, the vector $v$ is orthogonal to $\mathbf{1}$.

Moreover from \eqref{gg1} we have
$$
|\lambda|\sum_{j=1}^n|v_j|=\sum_{j=1}^n\Big|\sum_{k=1}^dv_{jk}\Big|
\le \sum_{j=1}^n\sum_{k=1}^d |v_{jk}|=d\sum_{j=1}^n|v_j|,
$$
hence $|\lambda|\le d$. We now assume that $\lambda=d$. From the preceding inequalities,
we get 
$$
d|v_j|=\Big|\sum_{k=1}^dv_{jk}\Big|=\sum_{k=1}^d |v_{jk}|,\ j=1,\dots,n.
$$
Since the graph is connected, it implies first that all the $v_j$ have the same absolute value.
Each equation in \eqref{gg1} where $\lambda=d$ shows that $d$ is the sum of $d$
roots of unity $v_{jk}/v_j$, $k=1,\dots,d$. Hence $v_{jk}=v_j$ for any $k=1,\dots,d$.
By the connectivity of the graph, it implies that all the $v_j$ are equal.
 It follows that the eigenspace associated to the eigenvalue $d$ is exactly the line $\mathbb{C}\mathbf{1}$.

\bigskip
\noindent
2)
Let $M=(a_{ij})_{i,j=1}^n$. We have $a_{ij}\in\mathbb{N}$ and $\sum_{k=1}^na_{ik}=d$ for any
$i=1,\dots,n$.
According to the same argument as in the first case, it is enough to prove that the eigenspace associated to the
maximal eigenvalue (namely $d$) is $\mathbb{C}\mathbf{1}$. If $v$ is an eigenvector for $d$ then
\begin{equation}\label{nn1}
\sum_{k=1}^na_{ik}v_k=dv_i,\quad i=1,\dots,n,
\end{equation}
and in the same way as above, we have 
$$
d|v_i|=\Big|\sum_{k=1}^na_{ik}v_k\Big|=\sum_{k=1}^na_{ik}|v_k|, \quad i=1,\dots,n.
$$
We deduce that the $v_k$'s for which $a_{ik}\ne0$ have the same argument than $v_i$.  Since the
graph is connected, all the $v_i$'s have the same argument. We may assume that $v_i\in\mathbb{R}^+$,
$i=1,\dots,n$.  Rearrange the $v_i$'s in the increasing order. Without loss of generality, we may assume
that $0\le v_1\le \cdots\le v_n$. We clearly obtain from \eqref{nn1} that $v_1=v_{k}$, for all $k$
such that $a_{ik}\ne0$. By minimality of $v_1$ and by connectivity of the graph, all the $v_i$'s are equal,
as asserted.
\end{proof}

We will also need an appropriate  bound for the number of edges between two sets of vertices in 
a directed regular graph.
The next lemma allows us to obtain Theorem \ref{th1} in a straightforward way as in the book of Alon and Spencer
\cite{AS}.

\begin{lem}\label{lem11}
Let $M$ be an $n\times n$ matrix and put $N=\mathstrut^{t}\! MM$.
We denote by $\theta_1\ge\theta_2\ge \cdots\ge \theta_n\ge0$ 
 the $n$ eigenvalues of the positive symmetric matrix $N$. Let $w$
be an eigenvector for $N$ associated to the eigenvalue $\theta_1$.
Then for any vector $v$ orthogonal to the $n$-vector $w$, we have
$$
|(Mv,Mv)|\le \theta_2\|v\|^2
$$
where $(\cdot,\cdot)$ denotes the canonical scalar product in $\mathbb{R}^n$ and $\|v\|=\sqrt{(v,v)}$ denotes the euclidean norm of $v$.
\end{lem}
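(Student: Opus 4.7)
The plan is to recognize $(Mv,Mv)$ as a quadratic form in $N=\mathstrut^{t}\! MM$ and then apply the spectral theorem to $N$, exploiting the orthogonality assumption to discard the contribution of the top eigenvalue $\theta_1$.

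First I would rewrite the scalar product: by definition of the transpose,
$$
(Mv,Mv)=(\mathstrut^{t}\! MMv,v)=(Nv,v).
$$
Since $N=\mathstrut^{t}\! MM$ is a real symmetric positive semidefinite matrix, the spectral theorem furnishes an orthonormal basis $(w_1,\dots,w_n)$ of $\mathbb{R}^n$ consisting of eigenvectors of $N$, with $Nw_i=\theta_i w_i$ and the $\theta_i$'s ordered as in the statement, $\theta_1\ge\theta_2\ge\cdots\ge\theta_n\ge 0$. I would normalize so that $w_1=w/\|w\|$ spans (together with the other eigenvectors sharing the eigenvalue $\theta_1$ if any) the top eigendirection; in the regime relevant to Theorem \ref{th1}, the connectedness hypothesis combined with Lemma \ref{lem10} guarantees that the $\theta_1$-eigenspace is the line $\mathbb{R}w_1$, so orthogonality to $w$ translates to orthogonality to the whole $\theta_1$-eigenspace.

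Next I would decompose $v$ in that orthonormal basis as $v=\sum_{i=1}^n c_i w_i$ with $c_i=(v,w_i)$, and use the hypothesis $v\perp w$ to conclude $c_1=0$. Then a direct computation gives
$$
(Nv,v)=\sum_{i=2}^n \theta_i c_i^2\le \theta_2\sum_{i=2}^n c_i^2\le \theta_2\sum_{i=1}^n c_i^2=\theta_2\|v\|^2,
$$
where the first inequality uses $\theta_i\le\theta_2$ for $i\ge 2$ and the non-negativity of $\theta_i$ and $c_i^2$. Combining with the identity of the first step yields the announced bound.

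There is no real obstacle; the only point that might deserve a brief comment is ensuring that $v\perp w$ forces $c_1=0$, which is automatic if we are allowed to pick $w_1$ proportional to $w$ in the orthonormal basis — permissible because $w$ is a non-zero eigenvector of $N$ for $\theta_1$ and any orthonormal basis of the $\theta_1$-eigenspace can be completed to an orthonormal eigenbasis of $\mathbb{R}^n$ by the spectral theorem. The positivity of the $\theta_i$'s, also provided by the spectral theorem for $N=\mathstrut^{t}\! MM$, is what allows us to write $|(Mv,Mv)|=(Nv,v)$ and discard the absolute value effortlessly.
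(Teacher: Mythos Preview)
Your proof is correct and follows essentially the same route as the paper's: rewrite $(Mv,Mv)$ as $(Nv,v)$, decompose $v$ in an orthogonal eigenbasis of $N$, drop the $\theta_1$-component via the orthogonality hypothesis, and bound the remaining terms by $\theta_2\|v\|^2$. You are in fact slightly more careful than the paper in flagging the subtlety when the $\theta_1$-eigenspace has dimension greater than one and explaining why this is harmless in the intended application.
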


\begin{proof}
By the spectral theorem we write $v=u_2+\cdots+u_n$ where $u_i$ is an eigenvector associated to $\theta_i$, $i=2,\dots,n$. We also know
by  that the $u_i$'s form an orthogonal family. We have
$Nv=\sum_{i=2}^n\theta_iu_i$, hence
$$
|(Mv,Mv)|=|\mathstrut^{t}v Nv|=\left|\sum_{i=2}^n\theta_i\|u_i\|^2\right| \le \theta_2\sum_{i=2}^n\|u_i\|^2=\theta_2\|v\|^2.
$$
\vspace{-1.3cm}\[\qedhere\]
\end{proof}

\bigskip
Finally we show a general bound for a  $d$-regular simple graph:

\begin{lem}\label{lem2}
Let $M$ be a  simple $d$-regular $n\times n$ matrix. Then for any vector $v$, one has
$$
|\mathstrut^{t}vMv|\le d\|v\|^2.
$$
\end{lem}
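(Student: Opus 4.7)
The plan is to expand the quadratic form $\mathstrut^{t}vMv=\sum_{i,j}a_{ij}v_iv_j$ directly using the explicit $0/1$ entries $a_{ij}$ of $M$, and then to exploit both the row and the column regularity of $M$ to collapse the double sum. There is no need here to invoke the spectral theorem or Lemma~\ref{lem10}; a clean elementary argument via the triangle inequality and AM--GM suffices, which is fortunate since $M$ is a priori the adjacency matrix of a \emph{directed} simple graph and so need not be symmetric.

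First I would apply the triangle inequality and the AM--GM inequality $|v_iv_j|\le(v_i^2+v_j^2)/2$ to obtain
\[
|\mathstrut^{t}vMv|\le\sum_{i,j}a_{ij}|v_i||v_j|\le\frac12\sum_{i,j}a_{ij}(v_i^2+v_j^2).
\]
Next I would split this last sum into its two natural halves. In the first half, factoring $v_i^2$ out, the inner sum $\sum_j a_{ij}$ is the out-degree of vertex $i$, which equals $d$ by row-regularity, giving $\frac d2\sum_i v_i^2=\frac d2\|v\|^2$. Symmetrically, in the second half, factoring $v_j^2$ out, the inner sum $\sum_i a_{ij}$ is the in-degree of vertex $j$, which equals $d$ by column-regularity, giving another $\frac d2\|v\|^2$. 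Adding the two contributions yields the desired bound $|\mathstrut^{t}vMv|\le d\|v\|^2$.

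There is no real obstacle to overcome here: the only delicate point is to remember that both row sums and column sums of $M$ equal $d$ in the directed $d$-regular setting, and that the entries being in $\{0,1\}$ lets one bound $a_{ij}|v_iv_j|$ by $a_{ij}(v_i^2+v_j^2)/2$ without losing a multiplicative constant. The hypothesis that the graph be simple is used precisely to keep the $a_{ij}$'s bounded by $1$, so that the AM--GM bound plugs in cleanly; for a general multigraph one would have to replace $d$ in the conclusion by a suitable maximum edge-multiplicity.
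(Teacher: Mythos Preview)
Your proof is correct, and it follows a genuinely different route from the paper's. The paper rewrites $\mathstrut^{t}vMv=\sum_{j}v_j(v_{j1}+\cdots+v_{jd})$ using the $d$ out-neighbours of each vertex, then applies Cauchy--Schwarz twice: once to bound each inner product $\sum_j v_j v_{jk}$ by $\|v\|\,V_k$ with $V_k=(\sum_j v_{jk}^2)^{1/2}$, and once more on the sum $\sum_k V_k$. Column-regularity enters at the last step to identify $\sum_k V_k^2=d\|v\|^2$. Your AM--GM argument bypasses both Cauchy--Schwarz steps and uses row- and column-regularity symmetrically in a single line; it is shorter and, as you note, insensitive to whether $M$ is symmetric.

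One small correction to your closing commentary: your argument does \emph{not} actually require the entries $a_{ij}$ to lie in $\{0,1\}$. The inequality $a_{ij}|v_iv_j|\le a_{ij}(v_i^2+v_j^2)/2$ holds for any $a_{ij}\ge0$, and the subsequent splitting only uses that row and column sums equal $d$. So your proof already covers $d$-regular multigraphs without any change, and the final remark about needing to replace $d$ by a maximum edge-multiplicity is unwarranted.
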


\begin{proof}
We have
$$
\mathstrut^{t}vMv=\sum_{j=1}^nv_j(v_{j1}+\cdots+v_{jd}),
$$
with the property that each $v_i$ appears exactly $d$ times among the elements
$v_{jk}$, $j=1,\dots,n$, $k=1,\dots,d$.

By the Cauchy-Schwarz inequality, we obtain
$$
|\mathstrut^{t}vMv|\le \sum_{k=1}^d \sum_{j=1}^nv_jv_{jk}\le \|v\| \sum_{k=1}^d V_k,
$$
where $V_k=(\sum_{j=1}^n v_{jk}^2)^{1/2}$. Again by the Cauchy-Schwarz inequality, 
$$
|\mathstrut^{t}vMv|\le \|v\| \sqrt{d}\left(\sum_{k=1}^d\sum_{j=1}^n v_{jk}^2\right)^{1/2}.
$$
By our assumption, the double summation equals $d\|v\|^2$. Hence the result.
\end{proof}

We now prove Theorem \ref{th1}.

\begin{proof}[Proof of Theorem \ref{th1}]
Let $V=\{1,2\dots,n\}$ be the vertices of $\Gamma$.
We let $s=|S|/n$ and $t=|T|/n$. Consider the vector 
$v=(v_1,\dots,v_n)$ defined by
$$
v_i= \begin{cases}  1-s &\text{si $i\in S$,}\\
-s & \text{otherwise.}
\end{cases}
$$
Then $\sum_{i=1}^nv_i=0$, thus $v$ is orthogonal to
$\mathbb{C}\mathbf{1}$ which is, by Lemma \ref{lem10}, 
the eigenspace associated to $d^2$, that is the largest
eigenvalue of $N=\mathstrut^{t}\! MM$. Then by Lemma
\ref{lem11}, we get the bound $\|Mv\|^2\le \theta_2\|v\|^2$.
We have $\|v\|^2=sn(1-s)^2+(n-sn)s^2=ns(1-s)$.
By letting $N_S(i)$ the set of vertices $j\in S$ such that $(i,j)\in E$,
the $i${th} coordinate of $Mv$ is equal to
$(1-s)|N_S(i)|-s(d-|N_S(i)|)$, thus
$$
\|Mv\|^2=\sum_{i\in V}\big((1-s)|N_S(i)|-s(d-|N_S(i)|)\big)^2
=\sum_{i\in V}(|N_S(i)|-sd)^2.
$$
Hence
\begin{equation*}
\sum_{i\in V}(|N_S(i)|-sd)^2\le \theta_2s(1-s)n\le \theta_2|S|,
\end{equation*}
which plainly implies
\begin{equation}\label{eqss}
\sum_{i\in T}(|N_S(i)|-sd)^2\le \theta_2|S|.
\end{equation}
This gives
$$
\left|e(S,T)-\frac{|S||T|d}n\right|\le \sum_{i\in T}\Big||N_S(i)|-sd\Big|
\le \sqrt{|T|}\left(\sum_{i\in T}\big(|N_S(i)|-sd\big)^2\right)^{1/2}
$$
by the triangle inequality and the Cauchy-Schwarz inequality.
By \eqref{eqss} the result follows. 
\end{proof}

\section{\bf Conditional expansion by the function $\boldsymbol{f(x,y)=g(x)(h(x)+y)}$ relatively to product function $\boldsymbol{xy}$}\label{s2}

Let $G$ be a subgroup of $\bFp^*$.
Here we assume that $g$ and $h$ are arbitrary functions
$$ 
g,g',h:G\to \bFp^*.
$$
For $A\subset G$, 
we are interested in the growth of the set $f(A,B)=\{g(x)g'(y)(h(x)+y)\,:\, x\in A,y\in B\}$ in terms of $|A|$ and $|B|$.
In this section we are mainly interested by the proof of 
Theorem \ref{t1} which refers to the case $g'=1$. 
But in section \ref{s4} the function $g'$ will be considered in all its generality.

For $a,b,c,d$ in $\bFp$, we first consider the number of solutions $N(a,b,c,d)$ to  the system
$$
\begin{cases}
ax=g(b)g'(y)(b+y)\\
cx=g(d)g'(y)(d+y).
\end{cases}
$$ 
%where $g':G\to \bFp^*$ is an arbitrary function.
%
It implies
$$
(cg(b)-ag(d))y=(adg(d)-bcg(b)).
$$
We distinguish the following cases:

\begin{itemize}
\item[--] If $cg(b)-ag(d)=adg(d)-bcg(b)=0$, then $(a,c)=(b,d)$
and each $y$ different from $-b$ gives a solution $(x,y)$:
$N(a,b,c,d)=p-2$.

%If $bc=ad$ and $d=b$, then $c=a$ (since $b\in B\subset \bFp^*$) and  $N(a,b,c,d)=p-2$.For a fixed $(a,b)$, there is exactly  one couple $(c,d)$ with this property.

\item[--]  If $cg(b) = ag(d)$ and $adg(d)\ne bcg(b)$, then $b\ne d$
and  $N(a,b,c,d)=0$. For a fixed $(a,b)$, there are $p-2$ couples $(c,d)$ with this property.

\item[--]  If $cg(b) \ne ag(d)$ and $bcg(b)=adg(d)$, then $b\ne d$
and $N(a,b,c,d)=0$.
 For a fixed $(a,b)$, there are $p-2$ couples $(c,d)$
with this property.

\item[--]  Assume now that 
\begin{equation}\label{eq2}
cg(b) \ne ag(d)\quad \text{and}\quad adg(d)\ne bcg(b).
\end{equation} 
Then
$(x,y)$ is solution if and only if 
$$
y=\frac{adg(d)-bcg(b)}{cg(b)-ag(d)},\quad
x=\frac{g(b)g(d)(d-b)}{cg(b)-ag(d)}g'\left(\frac{adg(d)-bcg(b)}{cg(b)-ag(d)}\right).
$$
Hence $N(a,b,c,d)=1$ if $d\ne b$, and $0$ otherwise.
In particular, $N(a,b,c,b)=0$ whenever  $c\ne a$ and $d=b$.
Hence there are $p-2$ more couples $(c,d)$ for which  $N(a,b,c,b)=0$.
\end{itemize}
For summarizing, for each $(a,b)$ there is one couple $(c,d)=(a,b)$
for which $N(a,b,c,b)=p-2$, $3p-6$ couples $(c,d)$ such that
$N(a,b,c,b)=0$ and for the $(p-1)^2-(3p-6)-1$ remaining couples $(c,d)$
we have $N(a,b,c,b)=1$.

\medskip
Let $\Gamma= (\bFp^*\times\bFp^*;\mathcal{E})$ be the (directed) graph defined by
$$
((a,b);(c,d))\in \mathcal{E}\iff ac=g(b)(b+d).
$$
We denote by $M$ its adjacency matrix. Notice that $M$ is 
generally not symmetric but  
$\Gamma$ is a $(p-2)$-regular directed graph:  indeed
$(a,b)$ being given, for any $d\ne -b$, there exists exactly one admissible $c$. Similarly for given $(c,d)$, there is exactly one
admissible $a$ for any $b\ne -d$.

\noindent
The product $\mathstrut^t\!MM$ is exactly the matrix whose coefficients are
the numbers $N(a,b,c,d)$, defining for two vertices
$(a,b)$ and $(c,d)$ of $\Gamma$. $\mathstrut^t\!MM$ is
a $(p-2)^2$-regular symmetric matrix.

\noindent
By the above computations on $N(a,b,c,d)$  we get
\begin{equation}\label{eq1}
\mathstrut^t\!MM=J+(p-3)I-E,
\end{equation}
where $J$ is the $(p-1)^2\times(p-1)^2$-square matrix composed uniquely by $1$, and  $E$ is the error matrix. 
Its coefficients are $0$ or $1$. Moreover, $E$ is symmetric, simple and $(3p-6)$-regular, in the sense that each row and each column
contain exactly $3p-6$ ``1", the other coefficients being $0$. 

We may
check that this multigraph is connected for $p$ enough large.  
Indeed, since for each row and each column, all but $3p-6<(p-1)^2/2$ coefficients of $\mathstrut^t\!MM$ are positive
(for $p>5$), it follows that all the coefficients of $(\mathstrut^t\!MM)^2$ are positive. In other words 
 each pair of vertices can be connected by a directed path formed by at most $2$ edges.

By the spectral theorem the eigenvalues of $\mathstrut^t\!MM$
are nonnegative real numbers.
We now apply Lemma \ref{lem10} to $\mathstrut^t\!MM$. 
We first obtain  that $(p-2)^2$ is the largest eigenvalue of $\mathstrut^t\!MM$. 
Let $(p-2)^2=\theta_1\ge\theta_2\ge\cdots\ge\theta_n\ge0$ be  
the eigenvalues of $\mathstrut^t\!MM$ and
$v$ be an eigenvector of $\mathstrut^t\!MM$ associated to the eigenvalue $\theta_2$.
By Lemma \ref{lem10} also, we get that $v$ is orthogonal to $\mathbf{1}$. Hence $Jv=0$. 
By Lemma \ref{lem2} we have $|\mathstrut^{t}vEv|\le (3p-6)\|v\|^2$. Hence from \eqref{eq1}, we deduce
$$
|\theta_2|\|v\|^2=|\mathstrut^{t}v\mathstrut^{t}\! MMv|\le 
(p-3)\|v\|^2+ |(v,Ev)|\le (4p-9)\|v\|^2,
$$
which gives $|\theta_2|\le 4p-9$.

We are in position to prove Theorem \ref{t1}.

\begin{proof}[Proof of Theorem \ref{t1}]
We now fix $A\subset G$ and $B\subset G$.
We may  apply  Theorem \ref{th1} with an appropriate choice of the sets of vertices $S$ and $T$. Let 
$$
m=\mu(g\cdot h).
$$

We define 
\begin{align*}
S&=\{(zg(zh(x))/g(x),zh(x)),\ (x,z)\in A\times C\},\\
T&=\{(g(x)(h(x)+y),yz),\ (x,y,z)\in A\times B\times C\}.
\end{align*}
We plainly have $|T|\le \min(|f(A,B)||B\cdot C|,|A||B||C|)$ and $|S|\le |A||C|$. Furthermore
for a  given quadruple $(u,v,w,t)$, the number of solutions $(x,y,z)$ to the system
\begin{equation}\label{eqca}
\begin{cases}
g(x)(h(x)+y)=u\\
yz=v\\
zg(zh(x))/g(x)=w\\
zh(x)=t,
\end{cases}
\end{equation}
is at most $m$. Indeed one easily check that these conditions imply
$y=vh(x)/t$, $g(x)=zg(t)/w$, $h(x)=t/z$ hence
$g(x)h(x)=tg(t)/w=ut/(v+t)$. By our assumption on $\mu(g\cdot h)$ there
are at most $m$ different values of $x$ and $(y,z)$ is uniquely 
determined in terms of $x$ by the second and the fourth equations of \eqref{eqca}.
Thus
$$
e(S,T)\ge\frac{|A||B||C|}{m}.
$$
By letting 
$X=\sqrt{|S||T|}$
we thus obtain from \eqref{eq0}  with $n=(p-1)^2$ and
$d=(p-2)$
$$
X^2+2p^{3/2}X-\frac{p|A||B||C|}m >0,
$$ 
hence
$$
X> \left(p^3+\frac{p|A||B||C|}m\right)^{1/2}-p^{3/2}.
$$ 
By the following  easy bound
which holds for any real number $a,b>0$
$$
\sqrt{a+b}-\sqrt{a}\ge\frac{b}{2\sqrt{a+b}}\ge\frac{b}{2\sqrt{2}\max(\sqrt{a},\sqrt{b})}
=\frac1{2\sqrt{2}}\min\left(\frac{b}{\sqrt{a}},\sqrt{b}\right),
$$
we get
$$
X^2\gg \min\left(\frac{(|A||B||C|)^2}{pm^2},\frac{p|A||B||C|}{m}\right)
$$
where the implied constant can be taken equal to $1/8$.

By using $|S|\le |f(A,B)||B\cdot C|$ and $|T|\le |A||C|$, we obtain the desired  lower bound 
$$
|f(A,B)||B\cdot C|\ge \frac18\min\left(\frac{|A||B|^2|C|}{pm^2},
\frac{p|B|}{m}\right),
$$
ending the proof of Theorem \ref{t1}.
\end{proof}

By the Freiman Theorem, the condition $|A\cdot A|<K|A|$ implies 
that $A$ is a big subset of a generalized geometric progression. 
It can be shown that for this kind of sets $A$ and $f(x,y)=x(x+y)$ then $|f(A,A)|$ is big.
More generally, we can deduce from Theorem \ref{t1} a conditional expanding property for
the function $f(x,y)=g(x)(h(x)+y)$.

\begin{cor} Let $f(x,y)=g(x)(h(x)+y)$ and $A\subset\bFp^*$ such that $|A\cdot A|\ll |A|^{1+\theta}$ where 
$\theta<1/2$. Then
$$
|f(A,A)|\gg \min\left(\frac{|A|^{3-\theta}}{pm^2},\frac{p|A|^{-\theta}}{m}\right),
$$
where $m=\mu(g\cdot h)$.
\end{cor}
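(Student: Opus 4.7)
The plan is to deduce this corollary as a direct specialization of Theorem \ref{t1}. Setting $B = C = A$ in the conclusion of Theorem \ref{t1} immediately yields
$$|f(A,A)| \cdot |A \cdot A| \geq \frac{1}{8}\min\left(\frac{|A|^{4}}{pm^{2}},\, \frac{p|A|}{m}\right),$$
with the same value $m = \mu(g \cdot h)$ and under the same hypotheses on $g$ and $h$, since those only require $A$ to sit inside the subgroup $G$. So the first step is purely notational: read off the $B=C=A$ case of the master inequality.

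The next step is to invoke the hypothesis $|A \cdot A| \ll |A|^{1+\theta}$ and divide. Since $|A \cdot A|$ multiplies $|f(A,A)|$ on the left-hand side, bounding it from above by a constant times $|A|^{1+\theta}$ and dividing distributes through the minimum on the right. Dividing the first branch by $|A|^{1+\theta}$ gives $|A|^{3-\theta}/(pm^{2})$, and dividing the second gives $p|A|^{-\theta}/m$, producing exactly the stated lower bound.

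There is no real obstacle here; the argument is a two-line specialization of Theorem \ref{t1}. The only point worth commenting on is the role of the hypothesis $\theta < 1/2$, which is absent from the algebraic derivation but is essential for the conclusion to carry genuine expansion content. Indeed, when $m = O(1)$ the two branches in the minimum balance at $|A| \asymp p^{2/3}$, with common value of order $p^{1 - 2\theta/3}$; this exceeds $|A| = p^{2/3}$ precisely when $\theta < 1/2$, in which case one obtains $|f(A,A)| \gg |A|^{1+\Delta(\theta)}$ for some $\Delta(\theta) > 0$ on a nontrivial range of sizes. At $\theta = 1/2$ the bound degenerates to the trivial estimate $|f(A,A)| \gg |A|$, which explains the restriction stated in the corollary.
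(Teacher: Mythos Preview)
Your proposal is correct and matches the paper's intended derivation: the paper does not spell out a proof of this corollary but states explicitly that it is deduced from Theorem~\ref{t1}, and your specialization $B=C=A$ followed by division through the hypothesis $|A\cdot A|\ll |A|^{1+\theta}$ is exactly that deduction. Your added remark explaining the role of the restriction $\theta<1/2$ is a nice complement, consistent with the paper's subsequent discussion of when the bound is nontrivial.
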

The previous bound is non trivial whenever $|A|^{2-\theta}>pm^2$ and $p>m|A|^{1+\theta}$.
For instance, if $|A|=p^{3/5}$ and $m=O(1)$, then $|f(A,A)|\gg |A|^{4/3-\theta}$.  For $\theta=1/9$,
it shows
$$
\max(|A\cdot A|,|f(A,A)|)\gg |A|^{1+1/9}.
$$
The previous bound holds when $g(x)=x$, \textit{i.e.} $f(x,y)=x(x+y)$. In that case $m=2$ and we have

\begin{cor} 
Let $f(x,y)=x(x+y)$ and $A\subset \bFp^*$. Then 
$$
\max(|f(A,A)|,|A\cdot A|)\gg 
\min\left(\frac{|A|^2}{\sqrt{p}},\sqrt{p|A|}\right).
$$
\end{cor}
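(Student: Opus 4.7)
The plan is to apply Theorem \ref{t1} directly with the specific choice $g(x)=x$, $h(x)=x$, $G=\mathbb{F}_p^*$, and $B=C=A$. First, I would verify that the quantity $m=\mu(g\cdot h)$ takes a small absolute value in this case. Since $g(x)h(x)=x^2$ and the equation $x^2=t$ has at most two solutions in $\mathbb{F}_p^*$, we have $m\le 2$.

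Next, I would feed these choices into the conclusion of Theorem \ref{t1}, which yields
\[
|f(A,A)|\,|A\cdot A|\;\ge\;\frac18\min\!\left(\frac{|A|^{4}}{pm^{2}},\,\frac{p|A|}{m}\right)\;\gg\;\min\!\left(\frac{|A|^{4}}{p},\,p|A|\right),
\]
where the implied constant absorbs the factors involving $m\le2$.

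The last step is to convert this product lower bound into a lower bound on the maximum. Since for any positive reals $X,Y$ one has $\max(X,Y)\ge\sqrt{XY}$, we obtain
\[
\max(|f(A,A)|,|A\cdot A|)\;\ge\;\sqrt{|f(A,A)|\,|A\cdot A|}\;\gg\;\min\!\left(\frac{|A|^{2}}{\sqrt{p}},\,\sqrt{p|A|}\right),
\]
taking the square root through the minimum. This is exactly the claimed bound.

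There is no real obstacle here: the corollary is essentially a plug-in of two natural specializations ($g=h=\mathrm{id}$ and $B=C=A$) into Theorem \ref{t1}, combined with the standard arithmetic-geometric type trick $\max\ge\sqrt{\cdot\,\cdot}$. The only thing to be attentive to is the bookkeeping of the constant $m$, which is controlled thanks to the fact that $x\mapsto x^{2}$ is at most $2$-to-$1$ on $\mathbb{F}_p^*$.
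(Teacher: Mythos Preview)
Your proof is correct and is exactly the argument the paper intends: specialize Theorem~\ref{t1} with $g(x)=h(x)=x$ (so $m=\mu(x^2)\le 2$) and $B=C=A$, then pass from the product bound $|f(A,A)|\,|A\cdot A|\gg\min(|A|^4/p,\,p|A|)$ to the bound on the maximum via $\max(X,Y)\ge\sqrt{XY}$. The paper does not spell out the last step here, but it is the same implicit trick used right after the statement of Theorem~\ref{t1} when deriving the exponent $\Delta(\alpha)$.
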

We may notice here that in \cite{Sh}, an unconditional sharp result
has been obtained for the set 
$A\circ B=\{a(a+b)\,:\, (a,b)\in A\times B\}$: if $\ln(|A||B|) >
 \frac32\ln p$
then $|A\circ B|\ge p+o(p)$.

\medskip
Another application is for $A$ being a set of $G$ the multiplicative subgroup of the non zero squares of $\bFp^*$. We fix a set of residues  
$R=\{a_1,\dots,a_{(p-1)/2}\}$ such that $a_i\pm a_j\ne0$ for all $i\ne j$. 
we define $g:G\to R$ to be a square root map: for $x\in R$,
$g(x^2)=x.$

We apply our result to $A,B\subset G=\{a_1^2,\dots,a_{(p-1)/2}^2\}$ and
$f(x,y)=g(x)(x+y)$. In our graph, for $x^2,y^2\in A$ and $z^2\in B$, the vertices $(z^2g(x^2z^2)/g(x^2),x^2z^2)$ and $(g(x^2)(x^2+y^2),y^2z^2)$  are connected since
$$
\frac{z^2g(x^2z^2)}{g(x^2)}g(x^2)(x^2+y^2)=g(x^2z^2)((zx)^2+(zy)^2)
=g((zx)^2)((zx)^2+(zy)^2).
$$
We also note that here $\mu(g\cdot\mathrm{id})\le 3$ since $x^3=t$ has at most $3$ different solutions.
We thus have proved the following result.

\begin{cor}
Let $f(x,y)=x(x^2+y^2)$ and $A\subset \bFp^*$. Then 
$$
\max(|f(A,A)|,|A\cdot A|)\gg 
\min\left(\frac{|A|^2}{\sqrt{p}},\sqrt{p|A|}\right).
$$
\end{cor}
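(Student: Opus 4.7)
The plan is to reduce the statement to Theorem \ref{t1} by passing to the squares of $A$. Concretely, for an arbitrary $A \subset \bFp^*$, I set $A_1 = \{a^2 : a \in A\} \subset G$, so that $|A_1| \ge |A|/2$ because the squaring map on $\bFp^*$ is two-to-one. I then apply Theorem \ref{t1} to $A_1$ with $B = C = A_1$ and the two-variable function $f_1(X,Y) = g(X)(X+Y)$, where $g$ is the square-root map from $G$ to $R$ fixed just above the corollary. The graph-theoretic identity displayed immediately before the corollary shows that Theorem \ref{t1} does apply with $h(X) = X$, and the accompanying remark that $\mu(g\cdot\mathrm{id}) \le 3$ (since $g(X)\cdot X = g(X)^3$ and cubing is at most three-to-one on $\bFp^*$) guarantees an absolute implied constant.

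Theorem \ref{t1} then delivers
$$
|f_1(A_1, A_1)|\cdot|A_1 \cdot A_1| \gg \min\!\left(\frac{|A_1|^4}{p},\; p|A_1|\right).
$$
To translate this back to the quantities in the corollary, I note first that $A_1 \cdot A_1 = (A \cdot A)^2$, so squaring the elements of $A \cdot A$ is surjective onto $A_1 \cdot A_1$ and hence $|A_1 \cdot A_1| \le |A \cdot A|$. Next, for any $x,y \in A$, the value $g(x^2) \in R$ equals $x$ or $-x$ according as $x$ or $-x$ lies in $R$, so
$$
f_1(x^2, y^2) = g(x^2)(x^2+y^2) = \pm\,x(x^2+y^2) = \pm f(x,y);
$$
consequently $f_1(A_1,A_1) \subseteq f(A,A) \cup (-f(A,A))$ and $|f_1(A_1,A_1)| \le 2|f(A,A)|$.

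Combining the three inequalities yields $|f(A,A)|\cdot|A\cdot A| \gg \min(|A|^4/p,\; p|A|)$, and the bound on $\max(|f(A,A)|,|A\cdot A|)$ then follows from the trivial estimate $\max(U,V)^2 \ge UV$ with $U=|f(A,A)|$, $V=|A\cdot A|$. The only mildly delicate point is the sign ambiguity of $g$, which prevents identifying $f_1(A_1,A_1)$ with $f(A,A)$ on the nose; this and the factor $|A_1| \ge |A|/2$ from squaring each contribute only constant factors, which are safely absorbed into the Vinogradov symbol, so no substantive obstacle beyond the ingredients already assembled for Theorem \ref{t1} arises.
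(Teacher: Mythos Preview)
Your proof is correct and follows essentially the same route as the paper: apply Theorem~\ref{t1} with $G$ the subgroup of non-zero squares, $h=\mathrm{id}$, and $g$ the square-root map into $R$ (so that $\mu(g\cdot\mathrm{id})\le 3$), then take the square root of the resulting product inequality. Your explicit passage from an arbitrary $A\subset\bFp^*$ to $A_1=\{a^2:a\in A\}\subset G$, together with the bounds $|A_1\cdot A_1|\le|A\cdot A|$ and $|f_1(A_1,A_1)|\le 2|f(A,A)|$, merely makes precise the reduction that the paper leaves implicit when it writes the elements of its set as squares $x^2,y^2$.
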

We can obtain  a similar result with $k$-th powers instead of squares.
These two above corollaries have to be compared with 
Theorem \ref{t5} which will be proved in section \ref{s7}. The bound
is in the spirit of  Garaev's bound when $f(x,y)=x+y$ in the background 
of the sum-product problem.

\section{\bf Conditional expansion by the function $\boldsymbol{f(x,y)=g(x)(h(x)+y)}$ relatively to the sum function $\boldsymbol{x+y}$}
\label{s3}

Here again, $G$ denotes a subgroup of $\bFp^*$, $A$ is a subset of $G$
and $B,C$ are subsets of $\bFp^*$. Moreover $g$ and $h$ are arbitrary functions from
$G$ into $\bFp^*$. 
We recall that $\mu(g)$ (resp. $\mu(h)$)  denotes the maximal number of solutions $x\in G$ of the equation $t=g(x)$ (resp. $t=h(x)$).

We now rely  the function $f(x,y)$  to the sum-product  graph
$\Gamma$: two vertices $(a,b)$ and$(c,d)$ are connected if $ac=b+d$.
% Since $\Gamma$ is symmetric and $d$-regular
%we may here directly apply Theorem 9.2.5 of \cite[Chap. 9]{AS}.
We argue as in the previous section by applying  Theorem\ref{th1}.

We let
\begin{align*}
S&=\{(g(x)(h(x)+y),y+z)\,:\, (x,y,z)\in A\times  B \times C\},\\
T&=\{(g(x)^{-1},h(x)-z)\,:\, x\in A,\ z\in C\}.
\end{align*}
Then $|S|\le |f(A,B)||B+C|$ and $|T|\le |A||C|$. 

\noindent
In order to estimate $e(S,T)$, we need an upper bound for  the number of solutions to the system
$$
\begin{cases}
g(x)(h(x)+y)=u\\
h(x)-z=v\\
g(x)=w\\
y+z=t.
\end{cases}
$$
There are at most $\mu(g)$ admissible values for $x$. 
Now $z$ and $y$ are uniquely determined by $x$.
We thus have 
$$
e(S,T)\ge \frac{|A||B||C|}{\mu(g)}.
$$

\noindent
Applying  Theorem \ref{th1} we get 
$$
|f(A,B)||A||C||B+C|\gg \min\left(\frac{p|A||B||C|}{\mu(g)},\frac{(|A||B||C|)^2}{p\mu(g)^2}
\right),
$$
%By the triangle inequality $|h(A)-B||A|\le |h(A)+B|^2$, 
hence
$$
|f(A,B)||B+C|\gg \min\left(\frac{p|B|}{\mu(g)},\frac{|A||B|^2|C|}{p\mu(g)^2}
\right)
$$
ending the proof of  Theorem \ref{t2}.\qed

\medskip
If we take $h(x)=x$,  by combining Theorems \ref{t1} and \ref{t2}, we get

\begin{cor} 
Let $f(x,y)=g(x)(x+y)$  such that $\mu(g)=O(1)$ and $A\subset \bFp^*$. Then 
$$
|f(A,A)|\times\min(|A\cdot A|,|A+A|)\gg 
\min\left(\frac{|A|^4}{p},p|A|\right).
$$
\end{cor}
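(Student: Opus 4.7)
The plan is to run Theorems \ref{t1} and \ref{t2} in parallel, both with the specialization $h(x)=x$ and the choice $B=C=A$, and then take whichever of the two output bounds corresponds to the smaller of $|A\cdot A|$ and $|A+A|$. This is exactly in the spirit of how the earlier two corollaries of this section were derived, but now applied simultaneously.

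First I would apply Theorem \ref{t2} with $h(x)=x$ and $B=C=A$. The parameter there is $m=\mu(g)$, which is $O(1)$ by hypothesis, so the theorem reads
\[
|f(A,A)|\cdot|A+A|\gg \min\!\left(p|A|,\ \frac{|A|^4}{p}\right).
\]
Next I would apply Theorem \ref{t1} with $h(x)=x$ and $B=C=A$. Here the parameter is $m=\mu(g\cdot\mathrm{id})$; under the (implicit) assumption that this is also $O(1)$ — which holds in every concrete case of interest, e.g.\ $g(x)=x$ giving $\mu(x^2)=2$, exactly as used in the preceding corollary — this yields
\[
|f(A,A)|\cdot|A\cdot A|\gg \min\!\left(\frac{|A|^4}{p},\ p|A|\right).
\]

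To finish, set $X=\min(|A|^4/p,\ p|A|)$. If $|A\cdot A|\le |A+A|$ then $\min(|A\cdot A|,|A+A|)=|A\cdot A|$, and the multiplicative bound from Theorem \ref{t1} directly gives $|f(A,A)|\cdot\min(|A\cdot A|,|A+A|)\gg X$. Otherwise $\min(|A\cdot A|,|A+A|)=|A+A|$ and the additive bound from Theorem \ref{t2} does the same. In either case we get the desired inequality
\[
|f(A,A)|\cdot\min(|A\cdot A|,|A+A|)\gg \min\!\left(\frac{|A|^4}{p},\ p|A|\right).
\]

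The only genuinely non-routine point is the parameter check $\mu(g\cdot\mathrm{id})=O(1)$ needed to invoke Theorem \ref{t1}; this is not literally a consequence of $\mu(g)=O(1)$, so the statement should be read as tacitly imposing this side condition (as happens automatically in the natural applications $g(x)=x$ and $g(x)=x^{k-1}$ already treated in the text). Aside from that bookkeeping, the combination itself is a one-line case split and the rest follows immediately from the two theorems.
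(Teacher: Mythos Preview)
Your approach is exactly the one the paper uses: the text introduces this corollary with the single sentence ``If we take $h(x)=x$, by combining Theorems \ref{t1} and \ref{t2}, we get'', and your case split on whether $|A\cdot A|$ or $|A+A|$ is smaller is precisely how that combination is meant to be read. Your flag about the side condition $\mu(g\cdot\mathrm{id})=O(1)$ is well taken---Theorem~\ref{t1} genuinely needs it and it is not a formal consequence of $\mu(g)=O(1)$---so you have in fact spotted a small imprecision in the corollary's stated hypotheses rather than a gap in your own argument.
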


\section{\bf A generalisation - Proof of Theorem \ref{t3}}
\label{s4}

We now consider expansion by $f(x,y)=g(x)h(y)(x+y)$ where $g$ and $h$ are 
functions defined on some subgroup $G$ of $\bFp^*$. One defines 
the graph $\Gamma$: the vertices are the couples $(a,b)\in
\bFp^*\times \bFp^*$ and
two vertices $(a,b)$ and $(c,d)$ are connected in $\Gamma$ if 
$$
ac=f(b,d).
$$
The coefficients of the matrix $\mathstrut^{t}\! MM$ where $M$ is the adjacency matrix
of the graph $\Gamma$ have been computed in Section \ref{s2}
(just think $h$ instead of $g'$).\\
In $\Gamma$, the vertices $(f(x,y),yz)$ and $(zg(xz)h(yz)g(x)^{-1}h(y)^{-1},xz)$ are connected.  Let
\begin{align*}
S&=\{(g(x)h(y)(x+y),yz)\,:\, x\in A,\ y\in B,\ z\in C\},\\
T&=\{(zg(xz)h(yz)g(x)^{-1}h(y)^{-1},xz)\,:\, x\in A,\ y\in B,\ z\in C\}.
\end{align*}
Then $|S|\le |f(A,B)||B\cdot C|$ and $|T|\le m|C||A\cdot C|$, where
$$
m=\max_z|\{zg(xz)h(yz)/g(x)h(y)\,:\, x,y\in G\}|.
$$ 
The number of edges between $S$ and $T$ is greater than 
$|A||B||C|/m'$ where $m'$ is the maximal number 
of solutions to the system
$$
\begin{cases}
g(x)h(y)(x+y)=u\\
yz=v\\
zg(zx)h(zy)=wg(x)h(y)\\
zx=t,
\end{cases}
$$
with fixed $(u,v,w,t)$. We get $xg(x)h(vx/t)=tg(t)h(v)/w$. If the  number 
of $x$ satisfying this identity is at most $m''$, then we obtain $m''\ge m'$ and
$e(S,T)\ge |A||B||C|/m''$.

The proof of Theorem \ref{t3} with $k=1$ can be easily read from section \ref{s2}.

If we choose $G$ to be the non zero $k$-th powers, and by letting
$f(x,y)=g(x)h(y)(x^k+y^k)$, we get
$$
|f(A,B)||A\cdot C||B\cdot C|\gg
\min\left(\frac{|A|^2|B|^2|C|}{p},p|A||B|\right)
$$
where the implied constant depends on $m$ and $k$.

Let us consider the particular case $f(x,y)=x^uy^v(x+y)$ where $u$ and $v$ are
fixed positive integers. 
The construction of the convenient graph with many edges reduces to the sets
\begin{align*}
S&=\{(x^uy^v(x+y),yz)\,:\, x\in A,\ y\in B,\ z\in C\},\\
T&=\{(z^{u+v+1},xz)\,:\, x\in A,\ y\in B,\ z\in C\}
\end{align*}
which satisfy 
 $|S|\le |f(A,B)||B\cdot C|$ and $|T|\le |A||C|$. Hence we have
\begin{equation}\label{nnn1}
|f(A,B)||B\cdot C|\gg 
\min\left(\frac{|A||B|^2|C|}{p},p|B|\right).
\end{equation}

\begin{rem}
When $|A|,|B|\asymp p^{\alpha}$ with $2/3\le \alpha<1$, the above result is sharp. Indeed,
letting $A=B=C$ being a geometric progression of length $\asymp p^{\alpha}$, we plainly have
$|A\cdot A|\ll |A|$ and $|f(A,A)|\le p$ hence $|f(A,A)||A\cdot A|\ll p|A|$.
\end{rem}

\begin{rem}
Letting $C=A$, we get
$$
\max(|f(A,B)|,|A\cdot B|)\gg \min\left(\frac{|A||B|}{\sqrt{p}},\sqrt{p|B|}\right).
$$
This yields the following
$$
|A|,|B|\asymp p^{\alpha}\Rightarrow 
\max(|f(A,B)|,|A\cdot B|)\gg|A|^{1+\Delta(\alpha)}
$$
with $\Delta(\alpha)=\min(1-1/2\alpha,(1/\alpha-1)/2)$.

It is not known whether or not the above bound is sharp, namely if there exists
sets $A$ and $B$ such that  $|A\cdot B|,|f(A,B)|\ll \min\left(\frac{|A||B|}{\sqrt{p}},\sqrt{p|B|}\right)$ for $|A|\asymp|B|\asymp p^{\alpha}$
and $0<\alpha<1$.
\end{rem}

%\begin{rem}
%The above bound is sharp. Indeed we can use Garaev-Shen's trick of  \cite{GS} to show this
%in the case $f(x,y)=x^uy^v(x^w+y^w)$.
%Let $g$ be a primitive root modulo $p$ and
%$$
%X=\{1+g^{j}\,:\, -wn\le j \le wn \}.
%$$
%By the Dirichet box principle, there exists an integer $u_0$ such that
%$$
%|X\cap\{g^{u_0+1},\dots,g^{u_0+n}\}|\ge \frac{wn^2}{p}.
%$$
%We let $A$ to be this set. Then by writing $f(x,y)=x^{u+w}y^{v}(1+y^{w}x^{-w})$.
%$$
%f(A,A)\subset\{g^{(u+v+w)(u_0+1)},\dots,g^{(u+v+w)(u_0+n)}\},
%$$  
%hence $|f(A,A)|\ll n\ll \sqrt{p|A|}$, as requested. Moreover clearly $|A\cdot A|\ll \sqrt{p|A|}$.
%\end{rem}

The bounds we obtained are non trivial for $\alpha>1/2$.  
For smaller $\alpha$ we can prove  the following explicit bound:

\begin{thm} \label{t7}
Let $A\subset \bFp^*$ such that $|A|\le p^{1/2-1/500}$. Let $f(x,y)=xy(x+y)$. Then
$$
\max(|f(A,A)|,|A\cdot A|)\gg|A|^{1+1/800}.
$$
\end{thm}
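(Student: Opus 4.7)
The strategy is to combine Li's small-set sum-product bound $\max(|A+A|,|A\cdot A|)\gg |A|^{13/12}$ (valid when $|A|\le\sqrt{p}$) with a structural counting inequality relating $A+A$, $A\cdot A$ and $f(A,A)$, refined through an explicit Balog--Szemer\'edi--Gowers extraction in the spirit of \cite{GS}. The margin in the hypothesis $|A|\le p^{1/2-1/500}$ ensures that the subsets produced by the extraction still have size below $\sqrt{p}$, so Li's theorem can be reapplied to them.

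The key preliminary inequality comes from the map $\phi:A\times A\to (A\cdot A)\times f(A,A)$, $\phi(x,y)=(xy,\,xy(x+y))$. Since the unordered pair $\{x,y\}$ is recovered from $(xy,x+y)$ as the roots of $z^2-(x+y)z+xy=0$, the map $\phi$ is at most $2$-to-$1$. For each $s\in A+A$ the fiber of $\phi(A\times A)$ on the line $L_s=\{(p,sp):p\in\bFp^*\}$ has size $r_{A+A}(s)/2$, while $L_s\cap\bigl((A\cdot A)\times f(A,A)\bigr)$ has cardinality at most $\min(|A\cdot A|,|f(A,A)|)$. Summing over $s\in A+A$ yields
\[
\tfrac12 |A|^2\ \le\ |A+A|\cdot\min(|A\cdot A|,|f(A,A)|). \qquad (*)
\]

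If $|A\cdot A|\gg|A|^{1+1/800}$ the conclusion is immediate, so assume $|A\cdot A|\ll|A|^{1+1/800}$; then Li's theorem forces $|A+A|\gg|A|^{13/12}$, and $(*)$ alone only produces the trivial bound $\min\gg|A|^{11/12}$. A refinement is therefore essential. Following the scheme of \cite{GS}, I would use the smallness of $|A\cdot A|$ and the assumed smallness of $|f(A,A)|$ to derive large multiplicative and $f$-energies $E_\times(A),E_f(A)\gg|A|^{3-1/800}$, then apply an explicit Balog--Szemer\'edi--Gowers extraction (in the spirit of the argument yielding $106/105$ for $|A(A+1)|$) to produce a subset $A'\subseteq A$ of size $|A'|\gg|A|^{1-O(1/800)}$ on which both the product doubling $|A'\cdot A'|/|A'|$ and the $f$-doubling $|f(A',A')|/|A'|$ are bounded by $|A|^{O(1/800)}$. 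Re-applying $(*)$ and Li's theorem to $A'$ (still of size $\le\sqrt{p}$ by the margin $p^{-1/500}$) closes the loop and yields the claimed exponent $1+1/800$. The hard step is precisely this BSG extraction with explicit quantitative tracking of losses: the constant $1/800$ is the result of balancing Li's $1/12$ against the constant-loss exponents in the explicit BSG of \cite{GS,BG}, and any improvement of either ingredient would translate directly into a correspondingly better exponent.
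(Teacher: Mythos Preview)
Your inequality $(*)$ is correct, but the ``refinement'' you describe does not close the argument. You propose to extract, via Balog--Szemer\'edi--Gowers, a subset $A'\subseteq A$ on which \emph{both} the product doubling and the $f$-doubling are small, and then to re-apply $(*)$ and Li's theorem to $A'$. There are two problems. First, no standard BSG statement produces a single subset with simultaneous control over two unrelated operations; you would need to explain how to achieve this. Second, and more fundamentally, even granting such an $A'$ you land in exactly the same impasse: if $|A'\cdot A'|$ and $|f(A',A')|$ are both $\ll K|A'|$, Li's theorem only forces $|A'+A'|\gg|A'|^{13/12}$, and then $(*)$ gives $\min(|A'\cdot A'|,|f(A',A')|)\gg|A'|^{11/12}$, which is again trivial. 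Nothing in this loop produces a contradiction with $K$ small.

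The paper's proof hinges on an idea that is absent from your sketch: it interprets $f(A,A)=\{x^2y+xy^2:x,y\in A\}$ as a \emph{restricted sumset} of the set $A^{(2)}\cdot A=\{x^2y:x,y\in A\}$ with itself, along the graph $\Gamma=\{(x^2y,xy^2):x,y\in A\}$. Since $A^{(2)}\cdot A\subset A\cdot A\cdot A$ and $|A\cdot A\cdot A|\le K^3|A|$ by Pl\"unnecke--Ruzsa, this graph is dense. Now BSG, applied once and \emph{additively}, yields $A_1\subset A^{(2)}\cdot A$ with $|A_1+A_1|\ll K^{66}|A_1|$. The crucial point is that the \emph{multiplicative} doubling of $A_1$ is automatically controlled, because $A_1\cdot A_1\subset (A\cdot A)^{\cdot 6}$, whose size is $\le K^6|A|$ by Pl\"unnecke--Ruzsa. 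Thus $A_1$ has small doubling for both addition and multiplication, contradicting Li's bound $|A_1+A_1|+|A_1\cdot A_1|\gg|A_1|^{13/12}$ once $K\ll|A|^{1/800}$. The hypothesis $|A|\le p^{1/2-1/500}$ is used precisely to ensure $|A_1|\le K^3|A|\le\sqrt{p}$, so that Li's theorem applies to $A_1$. The step you are missing is this passage from $A$ to the auxiliary set $A^{(2)}\cdot A$, which converts the smallness of $|f(A,A)|$ into \emph{additive} information amenable to BSG, while the smallness of $|A\cdot A|$ supplies the multiplicative control for free.
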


\begin{proof}
In the proof, the $C_i$'s will denote positive absolute constants.
We write $f(x,y)=x^2y+xy^2$ and we define $K$ by
$$
\max(|f(A,A)|,|A\cdot A|)=K|A|.
$$
By Pl\"unnecke-Ruzsa's  inequality for triple product in a commutative group,  we get $|A\cdot A\cdot A|\le K^{3}|A|$.  We denote by $A^{(k)}$ the set formed by the $k$-th powers  of the elements of $A$.
Let $\Gamma$ be the graph
$$
\Gamma=\{(u,v)\in (A^{(2)}\cdot A)\times  (A^{(2)}\cdot A)\,:\ (u^2/v,v^2/u)\in A^{(3)}\times A^{(3)}  \}.
$$ 
Then $\Gamma\subset (A^{(2)}\cdot A)\times  (A^{(2)}\cdot A)\subset (A\cdot A\cdot A)\times (A\cdot A\cdot A)$ and 
$$
|\Gamma|=|A^{(3)}|^2\gg|A|^2\ge |A\cdot A\cdot A|^2/K^6\ge |A^{(2)}\cdot A|^2/K^6.
$$ 
For $X,Y\subset A^{(2)}\cdot A$, we denote
the restricted sumset 
$$X\overset{\Gamma}{+} Y=\{x+y\,:\, x\in X,\, y\in Y,\, (x,y)\in\Gamma\}.
$$

\noindent
Since $|A^{(2)}\cdot A\overset{\Gamma}{+}A^{(2)}\cdot A| =|f(A,A)|\le K|A|$, by the Balog-Szemer\'edi-Gowers Theorem
 (see \cite[Theorem 2.29]{TV} where we change the exponent $K^4$ to $K^5$), there exists two subsets $A_1,B_1\subset A^{(2)}\cdot A $ 
 such that $|A_1|,|B_1|\ge C_1|A^{(2)}\cdot A|/K^6
\ge C_1|A|/K^6 $ and
$$
|A_1 + B_1|\le C_2K^{33}|A_1|^{1/2}|B_1|^{1/2}\le 
C_2 K^{33}|A_1|,
$$
by assuming (wlog) that $|A_1|\ge|B_1|$. By  Ruzsa's triangle inequality, we get $|A_1+A_1|\le C_3K^{66}|A_1|$. \\
We deduce that if  $K\ll |A|^{1/800}$, then $K\ll |A_1|^{1/794}$ and $|A_1+A_1|\ll |A_1|^{13/12}$. 
By  Pl\"unnecke-Ruzsa inequalities, we get $|A_1|\le |A\cdot A\cdot A|\le K^3|A|\le  |A|^{1+3/800}\le p^{1/2}$ by our assumption on $|A|$,  and by the sum-product estimate (see \cite{BT}) 
\begin{equation}\label{nnn4}
|A_1+A_1|+|A_1\cdot A_1|\gg |A_1|^{13/12}.
\end{equation}
Using $A_1\cdot A_1\subset A\cdot A\cdot A\cdot A\cdot A\cdot A$ and  Pl\"unnecke-Ruzsa inequalities again,
we must have 
$$
K^6|A|\ge |A\cdot A\cdot A\cdot A\cdot A\cdot A|\ge
|A_1\cdot A_1|\ge C_4|A_1|^{13/12}\ge \frac{C_5|A|^{13/12}}{K^{13/2}},
$$
yielding $K\gg|A|^{1/150}$, a contradiction.
Hence we have $K\gg |A|^{1/800}$,
%If $|A_1|\ge p^{1/2}\le|A|$, Plünnecke-Ruzsa inequalities give $|A_1|\le|A\cdot A\cdot A|\le K^3|A|$. 
%Assume again  $K\ll |A|^{1/800}$, then $|A_1|\ll |A|^{2/3}$.  By the sum-product estimate (see \cite{BT})
%$|A_1+A_1|+|A_1\cdot A_1|\gg |A_1|^2/\sqrt{p}$, 
 ending the proof.
\end{proof}
We did not optimize the expanding exponent $1/800$ in the above theorem, but according to the best known sum-product estimates, it is close to the best possible exponent that we can obtain by this method for small subset $A$ of $\mathbb{F}_p$.

\medskip
One can ask the question of obtaining a similar result with a pair of sets $A,B$. 
There are functions $f(x,y)$ which are not expanders but such that
$|f(A,A)|\gg |A|^{1+\theta}$ if $p^{\delta}<|A|<p^{1-\delta}$,
\textit{e.g.} $f(x,y) =x(1+y)$
is such a function.

The same arguments used in the proof of the above theorem, with an appropriate and key 
adaptation when applying Plünnecke-Ruzsa inequalities, allow us to show:

\begin{thm}\label{t8}
Let $A,B\subset \bFp^*$ such that $|A|,|B|\le p^{1/2-1/400}$ and $|A|\asymp|B|$. Let $f(x,y)=xy(x+y)$. Then
$$
\max(|f(A,B)|,|A\cdot B|)\gg|A|^{1+\theta}
$$
for some $\theta>0$.
\end{thm}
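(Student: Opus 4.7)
The plan is to mirror the proof of Theorem \ref{t7}, replacing each appearance of a single set by the appropriate pair of sets and adjusting the applications of Plünnecke--Ruzsa. Define $K$ via
$$\max(|f(A,B)|,|A\cdot B|)=K|A|,$$
and argue by contradiction, supposing $K\le |A|^{\theta}$ for a suitably small $\theta>0$ to be chosen at the end. The first step is to derive upper bounds for the mixed triple products $A^{(2)}\!\cdot\! B$ and $A\cdot B^{(2)}$ (and iterated variants) of the form $K^{c_1}|A|$ for some absolute constant $c_1$. This is precisely the ``appropriate and key adaptation'' mentioned by the authors: the multiplicative Plünnecke inequality for \emph{two} sets (combined with Ruzsa's triangle inequality in the multiplicative form $|X/Y|\le |X/Z||Z/Y|/|Z|$, and the hypothesis $|A|\asymp |B|$) yields $|A^a B^b|\ll K^{a+b-1}|A|$ for any fixed positive integers $a,b$.

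Next, write $f(x,y)=x^2y+xy^2$ and form the restricted graph
$$\Gamma=\bigl\{(x^2y,\,xy^2)\,:\,(x,y)\in A\times B\bigr\}\subset (A^{(2)}\!\cdot\! B)\times (A\cdot B^{(2)}).$$
Since the assignment $(u,v)\mapsto(u^2/v,\,v^2/u)=(x^3,y^3)\in A^{(3)}\times B^{(3)}$ is at most $9$-to-$1$ (cube roots in $\bFp^*$), we get $|\Gamma|\gg |A||B|$. Moreover the sumset along $\Gamma$ satisfies
$$\bigl|\{u+v\,:\,(u,v)\in\Gamma\}\bigr|=|f(A,B)|\le K|A|.$$
Apply the Balog--Szemerédi--Gowers theorem (in the form of \cite[Theorem 2.29]{TV}) to the pair $(A^{(2)}\!\cdot\! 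B,\,A\cdot B^{(2)})$ relative to $\Gamma$: this produces subsets $A_1\subset A^{(2)}\!\cdot\! B$ and $B_1\subset A\cdot B^{(2)}$ of size $\gg|A|/K^{c_2}$ with $|A_1+B_1|\le K^{c_3}|A_1|^{1/2}|B_1|^{1/2}\le K^{c_3}|A_1|$ (up to a harmless symmetrisation assuming $|A_1|\ge|B_1|$).

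Now invoke Ruzsa's triangle inequality to conclude $|A_1+A_1|\le K^{c_4}|A_1|$. On the multiplicative side, the inclusion $A_1\subset A^{(2)}\!\cdot\! B$ together with the bounds from step one gives $|A_1\cdot A_1|\le |A^{(4)}\!\cdot\! B^{(2)}|\le K^{c_5}|A|\le K^{c_5+c_2}|A_1|$. The hypothesis $|A|\le p^{1/2-1/400}$ guarantees $|A_1|\le p^{1/2}$ as long as $K$ is at most a small power of $|A|$, so Li's sum--product inequality \eqref{nnn4} applies:
$$|A_1+A_1|+|A_1\cdot A_1|\gg |A_1|^{13/12}.$$
Combining the preceding estimates forces $K^{\max(c_4,\,c_5+c_2)}\gg |A_1|^{1/12}\gg |A|^{1/12}/K^{c_2/12}$, which contradicts $K\le |A|^{\theta}$ for any sufficiently small explicit $\theta>0$, and yields the claimed bound.

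The main obstacle is step one: obtaining clean polynomial-in-$K$ bounds on mixed products such as $A\cdot A\cdot B$, $A\cdot B\cdot B$, and $A^{(4)}\!\cdot\! B^{(2)}$ from the sole hypothesis $|A\cdot B|\le K|A|$. Standard Plünnecke--Ruzsa for one set does not apply directly, and one must chain the two-set Plünnecke inequality with the multiplicative Ruzsa triangle inequality, exploiting $|A|\asymp |B|$ throughout. Once this bookkeeping is carried out, the remaining steps (BSG, Ruzsa triangle, Li's inequality) proceed along the same lines as in Theorem \ref{t7}, producing an explicit but deliberately unoptimised exponent $\theta>0$.
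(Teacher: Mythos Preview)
Your proposal is correct and follows essentially the same line as the paper's proof: build the graph $\Gamma$ on $(A^{(2)}\!\cdot B)\times(A\cdot B^{(2)})$, apply Balog--Szemer\'edi--Gowers, pass to $A_1$ via Ruzsa's triangle inequality, and finish with the sum--product estimate \eqref{nnn4}. The only substantive difference is in your ``step one'': the paper first passes to large subsets $A'\subset A$ and $B'\subset B$ by iterating the quantitative Pl\"unnecke--Ruzsa inequality (Theorem~1.7.3 of \cite{Ru}), which gives the needed bounds $|A'\cdot A'\cdot B'|\ll K^{4}|A'|$ and $|A'\cdot B'\cdot A'\cdot B'\cdot A'\cdot B'\cdot A'\cdot B'|\ll K^{8}|A'|$ directly, and then runs the whole argument on $A',B'$; you instead propose to bound $|A^{a}B^{b}|$ for the original sets. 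Your claimed exponent $K^{a+b-1}$ is optimistic without first passing to subsets --- a safe substitute is to observe that $C:=A\cup B$ has multiplicative doubling $\ll K^{2}$ (since $|A\cdot A|,|B\cdot B|\ll K^{2}|A|$ by Pl\"unnecke and $|A|\asymp|B|$), whence $|A^{a}B^{b}|\le|C^{a+b}|\ll K^{2(a+b-1)}|A|$ --- but since you explicitly do not optimise $\theta$ this is harmless; the paper's bookkeeping yields $\theta=1/800$.
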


\begin{proof}
We will not try to find the best exponent $\theta$.
We let
$$
\max(|f(A,B)|,|A\cdot B|)=K|A|.
$$
By the quantitative Pl\"unnecke-Ruzsa inequalities (cf. Theorem 1.7.3 of \cite{Ru} with $t=m/2$), there exists $A'\subset A$ such that
$|A'|\ge|A|/2$ and $|A'\cdot B\cdot B|\le 6K^2|A'|\ll K^2|B|$. Applying again these inequalities, there exists
$B'\subset B$  and $B''\subset B'$ such that 
$$|B'|\ge|B|/2\quad \text{and}\quad 
|B'\cdot B\cdot A'\cdot B\cdot A'|\ll K^4|B'|
$$ 
and
$$
|B''\cdot B\cdot A' \cdot B\cdot A' \cdot B\cdot A' \cdot B\cdot A'|\ll K^8|B''|.
$$
It follows that for these sets $A'$ and $B'$ we have
$$
|B'\cdot B'\cdot A'|\ll K^2|A'|\ \text{and} \ 
|B'\cdot A'\cdot A'|\ll K^4|B'| \ \text{and} \ 
|B\cdot A' \cdot B\cdot A' \cdot B\cdot A' \cdot B\cdot A'|\ll K^8|A'|.
$$
Furthermore $\max(|f(A',B')|,|A'\cdot B'|)\le 2K|A'|$.
Let $\Gamma$ be the graph
$$
\Gamma=\{(u,v)\in (A'^{(2)}\cdot B')\times  (B'^{(2)}\cdot A')\,:\ (u^2/v,v^2/u)\in A'^{(3)}\times B'^{(3)}  \}.
$$
Then $\Gamma\subset (A'^{(2)}\cdot B')\times  (B'^{(2)}\cdot A')\subset (A'\cdot A'\cdot B')\times (A'\cdot B'\cdot B')$ and 
$$
|\Gamma|\gg|A'||B'|\ge |A'\cdot A'\cdot B'||A'\cdot B'\cdot B'|/K^{6}\ge |A'^{(2)}\cdot B'||A'\cdot B'^{(2)}|/K^{6}.
$$ 
Since $|A'^{(2)}\cdot B'\overset{\Gamma}{+}B'^{(2)}\cdot A'| =|f(A',B')|\le2 K|A'|$, by the Balog-Szemer\'edi-Gowers Theorem
 (see \cite[Theorem 2.29]{TV}), there exists two subsets $A_1\subset A'^{(2)}\cdot B'$ and  
$B_1\subset B'^{(2)}\cdot A'$
 such that
$$
 |A_1|,|B_1|\ge C_1|A'|/K^{6}\quad \text{and}\quad |A_1 + B_1|\le C_2K^{33}|A_1|^{1/2}|B_1|^{1/2}\le 
C_2 K^{33}\max(|A_1|,|B_1|).
$$ 
Assume that $|A_1|\ge|B_1|$ (in the opposite case, we argue similarly by considering $B_1$ instead of $A_1$).  Then
by  Ruzsa's triangle inequality, we get
$$
|A_1+A_1|\le C_3K^{66}|A_1|.
$$
We deduce that if  $K\ll |A'|^{1/800}$, then $K\ll |A_1|^{1/794}$ and $|A_1+A_1|\ll |A_1|^{13/12}$. Since $A_1\cdot A_1\subset A'\cdot A'\cdot A'\cdot A'\cdot B'\cdot B'$ and $|A_1|\le |A'\cdot A'\cdot B'|\ll K^4|A'|\ll |A'|^{1+1/200}\le\sqrt{p}$,
the sum-product estimate \eqref{nnn4} holds and
$$
K^8|A'|\gg |A'\cdot A'\cdot A'\cdot A'\cdot B'\cdot B'|\ge
|A_1\cdot A_1|\ge C_4|A_1|^{13/12}\ge \frac{C_5|A'|^{13/12}}{K^{13/2}},
$$
yielding $K\gg|A'|^{1/174}$, a contradiction.
Since $|A'|\gg|A|$, 
it shows that we can choose $\theta=1/800$ in the theorem.
\end{proof}

\begin{rem}
A similar result  with  $f(x,y)=x^uy^v+x^{u'}y^{v'}$ where the positive integers $u,v,u',v'$ are
fixed,  can be proved in the same manner (with a weaker expanding exponent). 
\end{rem}

\section{\bf Conditional growth for functions and their shifted functions}
\label{s7}
Here we prove Theorems \ref{t5} and \ref{t6}. 

 \begin{proof}[Proof of Theorem \ref{t5}]
It is enough to consider the case where 
$f(x,y)=g(x)g'(y)(h(x)+y)$ and $P(f)(x,y)=xf(x,y)$ since the general case 
will follow by applying the simplest case with $w(A)$ instead of $A$
to the function 
$$
f(x,y)=g(w^{-1}(x))g'(y)(h(w^{-1}(x))+y),
$$
where $w^{-1}(x)$ is some preimage of $x$ by $w$. Our hypothesis
$\mu(w)=O(1)$ insures that $|w(A)|\gg|A|$.

Denote $\max(|f(A,A)|,|P(f)(A,A)|)=K|A|$. Let
$$
\Gamma=\{(u,v)\in A\times f(A,A)\,:\, v\in f(u,A)\},
$$
and denote
$X\overset{\Gamma}{\cdot}Y=\{xy\,:\, (x,y)\in X\times Y\}$
for $X,Y\subset A$.

The constant $C_i$ appearing in the sequence are positive
real numbers depending only on $f$.

We plainly have $|\Gamma|\ge C_0 |A|^2$, where $C_0$ depends only on $f$.
 Since $|A\times f(A,A)|\le K|A|^2$ and 
$|A\overset{\Gamma}{\cdot}f(A,A)|\le K|A|$, we get 
from the Balog-Szemer\'edi-Gowers Theorem (see \cite[Theorem 2.29]{TV}) that there is $A_1\subset A$ and $B_1\subset f(A,A)$ such that $|A_1|\ge C_1|A|/K$,  $|B_1|\ge C_1|A|/K$
and
$$
|A_1\cdot B_1|\le C_2K^8|A_1|^{1/2}|B_1|^{1/2}\le 
C_3 K^{9}|A_1|.
$$
By the Ruzsa  triangle inequality, 
$|A_1\cdot A_1|\le C_4 K^{18}|A_1|$.
By the result from the preceding sections (cf. Theorem \ref{t3}), we now can prove that $f(A_1,A_1)$ is big: this gives 
\begin{equation}\label{d1}
|f(A_1,A_1)|\ge \frac{C_5}{K^{36}}\min\left(\frac{|A_1|^3}{p},p\right) 
\end{equation}
hence 
\begin{equation}\label{d2}
K|A|\ge |f(A,A)|\ge \frac{C_6}{K^{39}} \min\left(\frac{|A|^3}{p},p\right).
\end{equation}
If $|A|\asymp p^{\alpha}$, then
we get $K\gg |A|^{\Delta(\alpha)}$ where
\begin{equation}\label{d3}
\Delta(\alpha)=\frac{\min(2-1/\alpha,1/\alpha-1)}{40}.
\end{equation}
For instance if $\alpha=2/3$ it yields the bound 
$$
\max(|f(A,A)|,|P(f)(A,A)|)\gg |A|^{81/80}.
$$
\nopagebreak
\vspace{-1.2cm}\[\qedhere\]
\end{proof}

\begin{proof}[Proof of Theorem \ref{t6}]
We argue similarly for the additive shifted function starting from $f(x,y)=g(x)(h(x)+y)$ in order to obtain a joint expanding lower bound for $f(x,y)$ 
and $S_w(f)(x,y)=w(x)+f(x,y)$. 

As in the product case, we may assume that $w(x)=x$ 
and we let 
$$\max(|f(A,A)|,|S(f)(A,A)|)=K|A|.
$$
In the above proof, we may replace product by sum in such a way
that we get  
by Balog-Szemer\'edi-Gowers Theorem, 
the existenceness of $A_1\subset A$ such that
$|A_1+A_1|\le C_4K^{18}|A_1|$ with $|A_1|\ge C_1|A|/K$.
By Theorem \ref{t2}, \eqref{d1}, \eqref{d2} and \eqref{d3} remain all valid
and Theorem \ref{t6} follows.
\end{proof}

%By a partial symmetric argument we may obtain 
%a similar result with $f(x,y)=g(x)(x+h(y))$ and $w(y)+f(x,y)$. 

\section{\bf Expansion in the real numbers}
\label{s6}

We do not investigate above the real expansion of sets of real numbers by functions $f(x,y)$. 
But it seems interesting to show that 
Solymosi's geometric approach of the sum-product problem (cf. \cite{So2}) applies for studying the expanding size
of $f(A,A)$  for functions like
$f(x,y)=xy(x^{k}+y^k)$ where 
$k\ne 0$ is a real number. 
%$g:\mathbb{R}^+\to\mathbb{R}^+$
%is increasing and satisfies the homogeneous property :
%\begin{equation}\label{hom}
%\forall x,\quad g(\alpha x)/g(x)=u(\alpha)\quad \text{depends only on $\alpha$}.
%\end{equation} 
%We immediately obtain that $u:\mathbb{R}^+\to\mathbb{R}^+$ is itself increasing and that $u(1)=1$.

Let $A$ be a finite set of real positive numbers. We first assume that 
$k>0$. We define in $\mathbb{R}^2$ the following product rule:
$$
(x,y)*(x',y')=(f(x,x'),f(y,y')).
$$
Let $E(A)$ the multiplicative energy of $A$. Then by usual means, we obtain
\begin{equation}\label{nnn3}
E(A)\ge\frac{|A|^4}{|A\cdot A|}.
\end{equation}
For $\alpha\in A/A=\{y/x\,:\, x,y\in A\}$,
we let $A_{\alpha}=\{(x,y)\in A\times A\,\mid\, y=\alpha x\}$. Hence
$E(A)=\sum_{\alpha}|A_{\alpha}|^2$. We denote $E_i=\{\alpha\in A/A\,:\, 2^{i}\le |A_{\alpha}|<2^{i+1}\}$.
We may 
write
$$
E(A)=\sum_{i\ge0}\sum_{\alpha\in E_i}|A_{\alpha}|^2.
$$
Hence there exists $i$ such that 
$$
\sum_{\alpha\in E_i}|A_{\alpha}|^2\ge \frac{E(A)}{\log|A|}.
$$
We put $d:=|E_i|$. Then 
\begin{equation}\label{nnn2}
4^{i+1}d\ge \frac{E(A)}{\log|A|}.
\end{equation}
We arrange the elements of $E_i$ in the increasing order,
$\alpha_1<\cdots<\alpha_d$ and we let $A_{\alpha_{d+1}}=\{a_0\}\times A$ where $a_0=\min A$.  We get
$$
|f(A,A)|^2=|(A\times A)*(A\times A)|\ge\Big|\bigcup_{j=1}^dA_{\alpha_j}*A_{\alpha_{j+1}}\Big|.
$$
Now notice that for $\alpha<\alpha'$, and $(x,y)\in A_{\alpha}$,  $(x',y')\in A_{\alpha'}$, we have
$$
\frac{f(y,y')}{f(x,x')}=\frac{f(\alpha x,\alpha' x')}{f(x,x')}=
\frac{\alpha^{1+k}\alpha'x^{1+k}x'+\alpha\alpha'^{1+k}xx'^{1+k}}{x^{1+k}x'+xx'^{1+k}}.
$$
It shows that $(x,y)*(x',y')$ is on the line passing by the origin with slope between $\alpha^{1+k}\alpha'$ and
$\alpha\alpha'^{1+k}$. Since $u$ is increasing, we have
for $\alpha<\alpha'<\alpha''$,  $\alpha^{1+k}\alpha'<\alpha\alpha'^{1+k}
<\alpha'^{1+k}\alpha''$,  hence 
$|f(A,A)|^2\ge \sum_{j=1}^d|A_{\alpha_j}*A_{\alpha_{j+1}}|$.
 Moreover another couple $((z,t);(z',t'))\in A_{\alpha}\times A_{\alpha'}$ gives a different point
$(z,t)*(z',t')$. Indeed if it coincides with $(x,y)*(x',y')$, then necessarily $z^{1+k}z'=x^{1+k}x'$ and $zz'^{1+k}=xx'^{1+k}$. Hence %$u(z/x)=u(z'/x')$
%giving $z/x=z'/x'$ since $u$ is increasing. It also implies
%$\beta^2u(\beta)=1$ where $\beta=z/x=z'/x'$.   
$z=x$ and $z'=x'$.
%by the facts that $u(1)=1$ and $u$ is increasing.

We thus deduce by \eqref{nnn2} and \eqref{nnn3}
$$
|f(A,A)|^2\ge 
\sum_{j=1}^d|A_{\alpha_j}||A_{\alpha_{j+1}}|\ge 4^id> \frac{E(A)}{4\log|A|}
\ge  \frac{|A|^4}{4|A\cdot A|\log|A|}.
$$
For getting the result when $k<0$, it suffices to 
consider $A_{1/\alpha}$ instead
of $A_{\alpha}$ at the beginning of the above proof. 
We thus have shown the following result.

\begin{prop}\label{pp71} Let $f(x,y)=xy(x^k+y^k)$ where $k$ is non zero real number.
%strictly monotone and satisfies the property \eqref{hom}. 
Let $A$ be a finite set of positive real numbers. Then
$$
\max(|f(A,A)|,|A\cdot A|)\gg \big(|A|^{4}/\log|A|\big)^{1/3}.
$$
\end{prop}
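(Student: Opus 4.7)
The plan is to adapt Solymosi's geometric sum-product argument from \cite{So2} to the function $f(x,y)=xy(x^k+y^k)$. First I would introduce on $\mathbb{R}^2$ the binary operation $(x,y)*(x',y')=(f(x,x'),f(y,y'))$, so that $|f(A,A)|^2\ge|(A\times A)*(A\times A)|$. The standard multiplicative-energy inequality $E(A)\ge|A|^4/|A\cdot A|$, with $E(A)=\sum_{\alpha\in A/A}|A_\alpha|^2$ and $A_\alpha=\{(x,y)\in A\times A\,:\,y=\alpha x\}$, furnishes the link to the product set on the right-hand side of the claimed bound.

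Next I would pigeonhole dyadically on the slopes: there is a scale $i$ for which the set $E_i=\{\alpha\in A/A\,:\,2^i\le|A_\alpha|<2^{i+1}\}$ satisfies $4^{i+1}|E_i|\gg E(A)/\log|A|$. Order $E_i$ as $\alpha_1<\cdots<\alpha_d$ and, for each $j$, form the $*$-products of consecutive classes $A_{\alpha_j}*A_{\alpha_{j+1}}$. The key computation is that for $(x,y)\in A_\alpha$ and $(x',y')\in A_{\alpha'}$,
\[
\frac{f(y,y')}{f(x,x')}=\frac{\alpha^{1+k}\alpha'\,x^{1+k}x'+\alpha\alpha'^{1+k}xx'^{1+k}}{x^{1+k}x'+xx'^{1+k}},
\]
which is a positive convex combination of $\alpha^{1+k}\alpha'$ and $\alpha\alpha'^{1+k}$. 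Since the map $u\mapsto u^{1+k}$ is monotone, the intervals of admissible slopes for consecutive pairs $(\alpha_j,\alpha_{j+1})$ interlace, so the images $A_{\alpha_j}*A_{\alpha_{j+1}}$ lie on pairwise disjoint rays through the origin.

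Within a single pair $A_\alpha\times A_{\alpha'}$, the map $*$ is injective: knowing the slope pins down the ratio $x'/x$, and knowing the first coordinate $f(x,x')=x^{1+k}x'+xx'^{1+k}$ then determines $(x,x')$. Combining disjointness across pairs with injectivity within each pair gives
\[
|f(A,A)|^2\ge \sum_{j=1}^{d}|A_{\alpha_j}|\,|A_{\alpha_{j+1}}|\gg 4^i|E_i|\gg \frac{|A|^4}{|A\cdot A|\log|A|},
\]
and rearranging the inequality $\max(|f(A,A)|,|A\cdot A|)^3\gg |A|^4/\log|A|$ yields the statement. For $k<0$ I would replace $A_\alpha$ by $A_{1/\alpha}$ so that the monotonicity of the slope function is preserved.

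The main obstacle is the slope-interlacing step: one needs the correct specific form $f(x,y)=xy(x^k+y^k)$ so that the ratio $f(y,y')/f(x,x')$ is trapped between $\alpha^{1+k}\alpha'$ and $\alpha\alpha'^{1+k}$ with the right monotonicity in $\alpha,\alpha'$. A generic two-variable function would not give disjoint rays and the whole pigeonhole scheme would collapse; verifying this monotonicity (and the injectivity within each pair) is where the structure of $f$ is actually used.
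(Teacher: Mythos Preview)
Your proposal is correct and follows essentially the same route as the paper: the binary operation $(x,y)*(x',y')=(f(x,x'),f(y,y'))$, the multiplicative-energy bound, dyadic pigeonholing on the slope classes $A_\alpha$, the slope computation showing $f(y,y')/f(x,x')$ lies between $\alpha^{1+k}\alpha'$ and $\alpha\alpha'^{1+k}$, the interlacing of consecutive slope intervals, the injectivity of $*$ on each $A_{\alpha}\times A_{\alpha'}$, and the reduction of $k<0$ to $k>0$ via $A_{1/\alpha}$ all match the paper's proof step for step. The only cosmetic difference is that you phrase injectivity as ``slope determines $x'/x$, then the first coordinate determines $(x,x')$'' whereas the paper solves the $2\times2$ linear system to obtain $z^{1+k}z'=x^{1+k}x'$ and $zz'^{1+k}=xx'^{1+k}$ directly; these are equivalent.
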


\begin{rem}
Notice that applying Proposition \ref{pp71} with $k=-1$
%$g(x)=1/x$ satisfies the homogeneous
%property \eqref{hom} with $u(\alpha)=1/\alpha$. Then by
gives back the Solymosi's sum-product estimate
$SP(A)\gg\big(|A|^{4}/\log|A|\big)^{1/3}$.
\end{rem}

In the particular case $f(x,y)=xy(x+y)$, the above bound is superseded by the following non conditional result.

\begin{prop}\label{pp73}
 Let $f(x,y)=xy(x+y)$ and $A,B$ be two non empty finite sets of non zero real numbers. Then
\begin{equation}\label{eqhh}
|f(A,B)|\gg |A|^{2/3}|B|^{2/3}.
\end{equation}
\end{prop}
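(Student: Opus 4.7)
The approach is to combine a Cauchy--Schwarz energy estimate with the extension of Szemer\'edi--Trotter to algebraic curves, Theorem~8.10 of \cite{TV}. Put $N=|f(A,B)|$ and $r(c)=\#\{(a,b)\in A\times B:f(a,b)=c\}$. Cauchy--Schwarz gives
\[
E:=\sum_c r(c)^2=\big|\{(a,b,a',b')\in(A\times B)^2:f(a,b)=f(a',b')\}\big|\ \ge\ \frac{|A|^2|B|^2}{N}.
\]
The factorization $f(a,b)-f(a,b')=a(b-b')(a+b+b')$ shows that the diagonal contribution (tuples with $a=a'$) to $E$ is at most $2|A||B|$: for fixed $a$ the collision $f(a,b)=f(a,b')$ forces $b=b'$ or $b+b'=-a$.

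For each $(a,a')\in A^2$ with $a\ne a'$ I attach the affine conic $C_{a,a'}=\{(b,b')\in\mathbb R^2:ab^2+a^2b=a'b'^2+a'^2b'\}$. A $3\times3$ determinant computation shows that the projective discriminant of $C_{a,a'}$ is proportional to $aa'(a-a')(a^2+aa'+a'^2)$, which is non--zero for $a,a'\in A\subset\mathbb R^*$ with $a\ne a'$ (since $a^2+aa'+a'^2>0$ over the reals). Hence $C_{a,a'}$ is a smooth irreducible conic, and the off--diagonal part of $E$ equals the total incidence count $I(B\times B,\mathcal C)$ where $\mathcal C=\{C_{a,a'}\}$.

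I would then check that $\mathcal C$ is a family of simple curves with two degrees of freedom in the sense of Theorem~8.10 of \cite{TV}. The coefficients of the defining polynomial of $C_{a,a'}$ recover $(a,a')$, so distinct parameters produce distinct conics, and any two members of $\mathcal C$ meet in at most four points by Bezout. Given two distinct points $(b_1,b_1'),(b_2,b_2')\in\mathbb R^2$, the conditions $f(a,b_i)=f(a',b_i')$ for $i=1,2$ become two polynomial equations of total degree $2$ in $(a,a')$, which are not proportional (again by the discriminant calculation), hence share at most four common solutions. Theorem~8.10 therefore delivers
\[
E\ \ll\ |A|^{4/3}|B|^{4/3}+|A|^2+|B|^2+|A||B|,
\]
and combined with $E\ge|A|^2|B|^2/N$ this yields
\[
N\ \gg\ \frac{|A|^2|B|^2}{|A|^{4/3}|B|^{4/3}+|A|^2+|B|^2}.
\]

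A short case analysis finishes the proof. In the balanced regime $|A|\le|B|^2$ and $|B|\le|A|^2$, the term $|A|^{4/3}|B|^{4/3}$ dominates the denominator and $N\gg|A|^{2/3}|B|^{2/3}$. In the unbalanced regime, say $|A|\ge|B|^2$, the trivial bound $N\ge|A|/2$ (the map $a\mapsto f(a,b_0)$ is at most two--to--one on $\mathbb R^*$ for any fixed $b_0\in B$) already yields $|A|/2\gg|A|^{2/3}|B|^{2/3}$, because $|A|\ge|B|^2$ rewrites as $|A|^{1/3}\ge|B|^{2/3}$. The hardest step of the plan is the geometric verification that $\{C_{a,a'}\}$ meets the hypotheses of Theorem~8.10 with a \emph{uniform} multiplicity bound; this forces one to treat the degenerate parameter values $a=a'$ (where $C_{a,a}$ splits as the union of the lines $b=b'$ and $b+b'=-a$) separately, which is exactly the origin of the $|A||B|$ term absorbed into $E$ above.
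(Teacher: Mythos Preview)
Your proposal is correct and follows the same strategy as the paper: Cauchy--Schwarz reduces to bounding the energy $\sum_z r(z)^2$, which is interpreted as incidences between $B^2$ and the conics $\{ab^2+a^2b=a'b'^2+a'^2b'\}$ (with the diagonal $a=a'$ handled separately), then bounded by the curve Szemer\'edi--Trotter theorem (Theorem~8.10 of \cite{TV}), with the unbalanced regime covered by the trivial estimate $|f(A,B)|\gg\max(|A|,|B|)$. The paper differs only in cosmetic details: it obtains the sharper intersection constant $3$ via an explicit elimination rather than your Bezout bound $4$, and it also removes the diagonal of $B^2$ from the point set---worth noting because when $b_i=b_i'$ the dual conic in $(a,a')$ degenerates to a pair of lines, so ``not proportional'' alone does not exclude a shared component, though a one-line case check confirms the bound $\le4$ still holds.
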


\begin{proof} We plainly may assume that 
\begin{equation}\label{eqkk}
|A|^{1/2}\le |B|\le |A|^2
\end{equation}
since otherwise the bound \eqref{eqhh} clearly holds by the
easy fact that $|f(A,B)|\gg\max(|A|,|B|)$.
 
For each $(a,b)\in A^2$, we let
$$
\gamma_{a,b}=\{(y,y')\in \mathbb{R}^2\,\mid\, ay^2+a^2y=by'^2+b^2y'\},
$$ 
and 
$$
\mathcal{P}=\{(y,y')\in B^2\,:\,y^3\ne y'^3\}
,\quad \mathcal{C}=\{\gamma_{a,b}\,:\, (a,b)\in A^2,\ a^3\ne b^3\}.
$$
%<<Observe that two different pairs $(a,b)$ and $(c,d)$ in $A^2$
%define two different curves of $\mathcal{C}$.>>
%hence $|A|^2-3|A|\le |\mathcal{C}|\le |A|^2-|A|$.
One has
\begin{equation}\label{eqdd}
\sum_{(a,b)\in A^2}|\gamma_{a,b}\cap B^2|
=\sum_{\substack{(a,b)\in A^2\\a^3 =b^3}}|\gamma_{a,b}\cap B^2|
+\sum_{\substack{(a,b)\in A^2\\a^3\ne b^3}}|\gamma_{a,b}\cap (B^2\setminus \mathcal{P})|
+\sum_{\substack{(a,b)\in A^2\\a^3\ne b^3}}|\gamma_{a,b}\cap \mathcal{P}|.
\end{equation}
For fixed $a\in A$, $\xi$ a cubic root of unity and $y\in B$, the equation $ay^2+a^2y=a\xi y'^2+a^2\xi^2y'$ has
at most $2$ solutions $y'\in B$, hence
the first sum in the right-hand side of \eqref{eqdd} is $\le 6 |A||B|$.
From the fact that 
\begin{equation}\label{eqgg}
(y,y')\in\gamma_{a,b}\iff (a,b)\in\gamma_{y,y'},
\end{equation}
it is not difficult to deduce  that the second sum in \eqref{eqdd}
is less
$$
\sum_{\substack{(y,y')\in B^2\\y^3 =y'^3}}|\gamma_{y,y'}\cap A^2|
$$
which is bounded in $6|A||B|$ in the same way as above.

The third sum in the right-hand side of \eqref{eqdd} counts the
number of pairs $(\pi,\gamma)\in \mathcal{P}\times
\mathcal{C}$ such that $\pi\in \gamma$, that is the number of incidences of points in $\mathcal{P}$ on the curves of $\mathcal{C}$. 
We now check that two different curves of $\mathcal{C}$ intersect in at most  three points 
and that two different points of $\mathcal{P}$ are simultaneously incident to at most three curves of $\mathcal{C}$. These two 
statement are essentially equivalent by \eqref{eqgg}. It is thus sufficient  to show that
if $(a,b)\ne(c,d)$ and $a^3\ne b^3$ then 
\begin{equation}\label{eqee}
|\gamma_{a,b}\cap\gamma_{c,d}|\le3.
\end{equation}
A point $(y,y')\in(\mathbb{R}^*)^2$ belongs to $\gamma_{a,b}\cap\gamma_{c,d}$ if and only if
$$
ay^2+a^2y=by'^2+b^2y'\quad\text{and}\quad
cy^2+c^2y=dy'^2+d^2y'.
$$
By letting $z=y'/y$ one gets
\begin{equation}\label{eqff}
(a-bz^2)y=(b^2z-a^2)\quad\text{and}\quad
(c-dz^2)y=(d^2z-c^2),
\end{equation}
hence $(a-bz^2)(d^2z-c^2)=(c-dz^2)(b^2z-a^2)$ or equivalently
$bd(b-d)z^3+(a^2d-bc^2)z^2+(b^2c-ad^2)z+ac(a-c)=0$. This equation has at most 3 solutions $z\in \mathbb{R}$ if $b\ne d$ and at most 2 solutions if $b=d$ and $a\ne c$. Each solution $z$ yields 
at most one solution $y$ to the system \eqref{eqff}, except possibly 
when
$a^3=b^3$, $c^3=d^3$ and $ab=cd$ in which case
$z=a^2/b^2$ is a cubic root of unity.
 Whence \eqref{eqee}.

By the generalized Szem\'eredi-Trotter type Theorem 8.10
of \cite{TV}  it follows that
$$
\sum_{\substack{(a,b)\in A^2\\a^3\ne b^3}}|\gamma_{a,b}\cap \mathcal{P}|
\ll |\mathcal{P}|+|\mathcal{C}|+(|\mathcal{P}||\mathcal{C}|)^{2/3}
\le
|A|^2+|B|^2+(|A|^2|B|^2)^{2/3}.
$$
For $z\in f(A,B)$ let $r(z)$ be the number of pairs $(a,y)\in A\times B$
such that $z=ay^2+a^2y$. Hence
by Cauchy-Schwarz inequality we get
\begin{align*}
|f(A,B)|&\ge \frac{\left(\sum_{z}r(z)\right)^2}{\sum_{z}r(z)^2}
=\frac{|A|^2|B|^2}{\sum_{(a,b)\in A^2}|\gamma_{a,b}\cap B^2|}\\
&\gg \frac{|A|^2|B|^2}{12|A||B|+|A|^2+|B|^2+
(|A||B|)^{4/3}}\gg (|A||B|)^{2/3}\quad\text{by \eqref{eqkk},}
\end{align*}
as asserted.
\end{proof}

\end{document}